\newcommand{\cP}{\mathcal{P}}
\newcommand{\R}{\mathbb{R}}
\title{A new approach for the existence problem of
minimal cubature formulas based on the Larman-Rogers-Seidel theorem}
\author{Masatake Hirao\thanks{Graduate School of Information Science,
  Nagoya University, Furo-cho, Chikusa-ku, Nagoya 464-8601, Japan
 ({\tt hirao@math.cm.is.nagoya-u.ac.jp}).}\and
 Hiroshi Nozaki\thanks{Graduate School of Information Sciences,
 Tohoku University, 6-3-09 Aramaki-Aza-Aoba, Aoba-ku, Sendai 980-8579, Japan
 ({\tt nozaki@ims.is.tohoku.ac.jp}).}\and
 Masanori Sawa\thanks{Graduate School of Information Science,
 Nagoya University, Furo-cho, Chikusa-ku, Nagoya 464-8601, Japan
 ({\tt sawa@is.nagoya-u.ac.jp}).}\and
 Vesselin Vatchev\thanks{Department of Mathematics,
 University of Texas, 80 Fort Brown, Brownsville, TX 78520, USA
 ({\tt vesselin.vatchev@utb.edu}).}}
\begin{document}

\maketitle

\begin{abstract}
In this paper we consider the existence problem 
of cubature formulas of degree $4k+1$ for spherically symmetric integrals
for which the equality holds in the M\"oller lower bound.
We prove that for sufficiently large dimensional minimal formulas,
any two distinct points on some concentric sphere
have inner products all of which are rational numbers.
By applying this result
we prove that for any $d \ge 3$
there exist no $d$-dimensional minimal formulas 
of degrees $13$ and $21$ for some special integral.
\end{abstract}

\begin{keywords} 
Chebyshev polynomial,
cubature formula, Larman-Rogers-Seidel theorem, 
minimal formula, Mysovskikh theorem
\end{keywords}

\begin{AMS}
65D32, 52C99, 05E99
\end{AMS}

\pagestyle{myheadings}
\thispagestyle{plain}
\markboth{M. HIRAO, H. NOZAKI, M. SAWA, AND V. VATCHEV}
{NEW APPROACH FOR EXISTENCE OF MINIMAL FORMULAS}

\section{Introduction}
\label{sect:1}
A main problem of numerical integration is to approximate the integral
\begin{equation*}
\int_\Omega f(x) {\rm d} \mu.
\end{equation*}
Here $x$ is 
a $d$-dimensional vector and $\mu$ is a positive measure 
on a domain $\Omega$ in $\mathbb{R}^d$.
We search for an approximant by taking a positive linear combination 
of the function values of $f$ 
as specified points $X = \{ x_{1}, \ldots, x_{N} \}$,
that is,
\begin{equation}
\label{eq:CF}
\sum_{i = 1}^N w_i f(x_i).
\end{equation}
We call (\ref{eq:CF}) a {\it cubature formula}.
The values $w_i$ are the {\it weights} and $x_i$ are
the {\it points} of a cubature formula.
From the practical viewpoint of numerical computation,
it is desirable that every coefficient is positive; see~\cite[p.12]{S71}.
To each formula we assign the set of functions for which it is exact.
Most often this set is the space of all polynomials of degree at most $t$;
in this case a cubature formula is said to be {\it of degree $t$}.
We refer the readers to the comprehensive monographs~\cite{DX01,SV97}
for the basic theory of cubature formula.
Hereafter we only consider cubature formulas of odd degree.

It is well known~\cite{R48} that
the smallest possible number of points $|X|$
in a cubature formula of degree $2k+1$ is bounded from below:
\begin{equation}
\label{eq:naturalbound}
|X| \ge \dim \mathcal{P}_k (\Omega).
\end{equation}
Here $\mathcal{P}_k (\Omega)$ is the space of
polynomials of degree at most $k$ restricted to $\Omega$.
The bound (\ref{eq:naturalbound}) can be improved for some broad class of 
integrals called {\it spherically symmetric integrals}~\cite{M79,My75}
(see Section~\ref{sect:2} for the definition of spherical symmetry):
\begin{equation}
\label{eq:Moller}
|X| \ge \left\{\begin{array}{ll}
2 \dim \mathcal{P}_k^* (\Omega) - 1 
& \text{if $k$ is even and $0 \in X$}, \\
2 \dim \mathcal{P}_k^*  (\Omega)
& \text{otherwise}.
\end{array} \right.
\end{equation}
Here $\mathcal{P}_k^* (\Omega)$ is the subspace of $\mathcal{P}_k (\Omega)$
consisting of even polynomials or odd polynomials
if $k$ is even or odd, respectively.
A cubature formula is said to be {\it minimal} 
if equality holds in one of the above lower bounds.

A fundamental problem in combinatorics and numerical analysis
is the existence and nonexistence of
minimal formulas for spherically symmetric integrals.
In the one-dimensional case there always exists a minimal formula
of odd degree, and the points and the weights of the formula
can be uniquely determined in terms of roots of
an orthogonal polynomial~\cite{DX01}.
This one-dimensional formula is known as Gaussian quadrature formula.
The problem becomes much harder in multidimensional cases.
In the two-dimensional case
many minimal formulas have been already 
found~\cite{Baj06,H75,S71,VC92,X98}.
An outstanding result is due to Xu~\cite{X98}:
He found some special spherically (circularly) symmetric integral
for which there exists a minimal formula of {\it general degree}.
In higher dimensional cases, however,
only a few minimal formulas have been found so far,
all of which have small degrees or low dimensions.
It seems to be
the conventional belief (see, e.g.,~\cite{B10,DS89,HS09,NS88,NS04,X03})
that there exists no minimal formula
of degree $4k+1$ for any $d$-dimensional
spherically symmetric integrals for $k \ge 1$ and $d \ge 3$
with some possible exceptional cases.
There are some reasons why
we focus on the degree $4k+1$ case:
The structure of minimal formulas is strongly restricted.
For instance
a famous theorem by M\"oller~\cite{M76} states that
the points of a minimal formula have the antipodality and contain the origin.

There are some recent papers involving small degree cases which
support the conventional belief mentioned above.
Hirao and Sawa~\cite{HS09} observed the relationship
between minimal formulas of degree $5$ and
certain combinatorial objects called tight spherical $5$-designs, and 
thereby
showed nonexistence of minimal formulas of degree $5$ with possible 
exceptions
$d = (2m + 1)^2 - 2$, $m \ge 5$;
see also~\cite{NS04,V04}.
In the same paper they also proved there exists
no minimal formula of degree $9$ for
classical integrals like,
Gaussian integral on $\mathbb{R}^d$,
Jacobi integral on the ball.
Bannai and Bannai~\cite{BB10} improved this result
for all spherically symmetric integrals and
concluded the conventional belief is true for the degree $9$ case.
However as far as the authors know,
very little is known on the existence and nonexistence of minimal formulas
of degree at least $13$ for general values of $d \ge 3$.

The present paper has several aims.
The primary one is to develop
a new theory to number-theoretically characterize
the structure of minimal formulas of degree $4k+1$
for spherically symmetric integrals.
To do this,
we unify a theorem by Mysovskikh~\cite{My81} which is
classical in numerical analysis and
the Larman-Rogers-Seidel theorem~\cite{LRS77} (and
its generalization by Nozaki~\cite{N10}).
The Larman-Rogers-Seidel theorem is widely accepted in
combinatorics, but is not fully recognized
in numerical analysis and related areas.
Conversely,
the Mysovskikh theory is not fully understood by
researchers in combinatorics.
Thus the secondary aim of this paper is to inform
the Larman-Rogers-Seidel theorem to researchers
in numerical analysis, and conversely,
to let combinatorialists know
details of the Mysovskikh theory.
The final aim is to discuss the usefulness of our new theory.

The present paper is organized as follows.
In Section 2 we review some basic facts about minimal formulas and
give preliminary lemmas for further arguments
in the following sections.
In Section 3 by bringing the Mysovskikh theorem and
a generalization of the Larman-Rogers-Seidel theorem together,
we prove that
for any spherically symmetric integral,
the points of a minimal formula of degree $4k+1$
on a particular concentric sphere
have inner products all of which are rational numbers.
In Section 4 we discuss applications of the new theory
to the existence problem of minimal formulas:
We prove that for any $d \ge 3$,
there exist no $d$-dimensional minimal formulas of degree $13$ and $21$
for some particular integral.

\section{Preliminaries}
\label{sect:2}
For $p, q \in \R$ with $0 \leq p < q \leq \infty$,
let $\Omega = \{ x \in \R^{d} \mid p \leq \| x \| < q \}$ 
and $W$ be a
positive weight function on $\Omega$ 
which is a function of $\| x \|$. 
Under the assumption that polynomials up to 
a sufficient large degree are integrable with respect to $\Omega$ for $W$, 
we consider the integral defined by
\begin{equation}
\label{eq:ssi2}
\int_{\Omega} f (x) W(\| x \|) {\rm d} x
= \int_{p}^{q} \left ( \int_{S^{d-1}} f (r x) {\rm d} \rho \right ) r^{d-1} 
W (r) {\rm d} r,
\end{equation}
where $\rho$ is the surface measure on the $d$-dimensional unit sphere 
$S^{d-1}$.
This is called a {\it spherically symmetric integral}.
Several classical integrals like,
Gaussian integral on the entire space,
Jacobi integral on the ball,
belong to the class of spherically symmetric integrals.
Cubature formulas for spherically symmetric integrals find
applications in various fields;
for instance Lyons and Victoir~\cite{LV04} recently considered
cubature formula on Wiener space for application to mathematical finance, 
where cubature for Gaussian integral plays a key role.
Hereafter we only consider spherically symmetric integrals.

Let $l$ be a nonnegative integer.
Let ${\rm Hom}_{l} (\R^{d})$ be the space of all homogeneous polynomials of 
degree $l$ 
and $r_{l}^{d} = \dim ({\rm Hom}_{l} (\R^{d}))$.
We consider an orthonormal basis of ${\rm Hom}_{l} (\R^{d})$, 
say $\{P_{l ,i} \mid 1 \leq i \leq r_l^d \}$, 
and let $\mathbb{P}_{l} = (P_{l, 1},\ldots,P_{l ,r_l^d})$.
The bilinear form
\begin{equation*}
K_{2k} (x,y) 
= \sum_{m = 0}^{k} \mathbb{P}_{2m} (x) \mathbb{P}_{2m}^{\rm T} (y).
\end{equation*}
is called {\it the reproducing kernel of $\cP^{*}_{2k} (\R^{d})$ 
with respect to $\Omega$ for $W$}.
We note that any finite dimensional Hilbert space 
has a unique reproducing kernel; for instance see~\cite{DX01}.  

There is a close relationship between 
minimal formulas of odd degree and reproducing kernels,
as the following famous theorem by Mysovskikh shows~\cite{My81};
we write the statement only for the degree $4k+1$ case.
\begin{theorem}[\cite{My81}]
\label{thm:Mysovskikh}
Assume there exists a minimal formula $X$ of degree $4k+1$ 
for a spherically symmetric integral.
Then, the formula is minimal if and only if the following hold:
\begin{enumerate}
\item[{\rm (i)}] 
$\forall x, y \in X,\; x \ne \pm y,\quad K_{2k}(x, y) =0$.
\item[{\rm (ii)}]
$\forall x \in X \setminus \{0\},\; w(x) = 1/(2 K_{2k}(x, x)) 
\quad \text{and} \quad w(0) = 1/ K_{2k}(0, 0)$.
\end{enumerate}
\end{theorem}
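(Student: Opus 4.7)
My plan is to combine the reproducing property of $K_{2k}$ with the exactness of the formula on products of two elements of $\cP^{*}_{2k}(\R^{d})$, packaged into a single Gram-matrix identity. Throughout I would invoke M\"oller's antipodality theorem for minimal formulas of degree $4k+1$: such a formula is antipodal with matching weights, and contains the origin precisely when the lower bound in (\ref{eq:Moller}) takes the form $2\dim\cP^{*}_{2k}-1$.

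For the forward direction, let $X_{+}$ be a set of representatives of $X$ modulo $x\mapsto -x$, and define effective weights $c_{x}:=2w(x)$ for $x\ne 0$ and $c_{0}:=w(0)$. Write $V:=\cP^{*}_{2k}(\R^{d})$. Applying the formula to $pq$ with $p,q\in V$ (whose product has degree $\le 4k$) and using the evenness of $p,q$ together with the antipodality of $X$, one obtains
\begin{equation*}
\int_{\Omega} p(x)\,q(x)\,W(\|x\|)\,\mathrm{d}x \;=\; \sum_{x\in X_{+}} c_{x}\,p(x)\,q(x).
\end{equation*}
This displays the evaluation map $r\colon V\to\R^{X_{+}}$ as an isometry onto its image; in particular $r$ is injective, and by minimality $\dim V=|X_{+}|$, so $r$ is a bijection. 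Consequently the reproducing kernels $\{K_{2k}(x,\cdot):x\in X_{+}\}$, which represent the point-evaluation functionals, form a basis of $V$.

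Next I would apply the formula to $y\mapsto K_{2k}(x,y)\,K_{2k}(x',y)$ (again of degree $\le 4k$) and invoke the reproducing identity on the left-hand side to get
\begin{equation*}
K_{2k}(x,x') \;=\; \sum_{z\in X_{+}} c_{z}\,K_{2k}(x,z)\,K_{2k}(x',z), \qquad x,x'\in X_{+}.
\end{equation*}
In matrix form this reads $\tilde M\,\tilde C\,\tilde M=\tilde M$, where $\tilde M=(K_{2k}(x,x'))_{x,x'\in X_{+}}$ is the Gram matrix of the basis above and $\tilde C=\mathrm{diag}(c_{z})$. Since $\tilde M$ is invertible, this forces $\tilde C\tilde M=I$, which is precisely (i) and (ii). For the converse, condition (i) makes $\{K_{2k}(x,\cdot):x\in X_{+}\}$ an orthogonal family in $V$, and (ii) ensures nonzero norms, so the family is linearly independent; hence $|X_{+}|\le\dim V$, and together with the M\"oller lower bound (\ref{eq:Moller}) this forces equality, i.e., minimality.

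The only place where the degree hypothesis $4k+1$ enters essentially is the exactness on degree-$4k$ products, which underlies both the isometry statement for $r$ and the Gram-matrix identity; I expect this, together with the careful bookkeeping of the factor of $2$ (or $1$) in the effective weights $c_{x}$ when $0\in X$, to be the main delicate point, as it is what produces the asymmetric form of condition (ii).
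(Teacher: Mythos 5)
The paper offers no proof of Theorem~\ref{thm:Mysovskikh}: it is quoted from Mysovskikh \cite{My81} (with a pointer to \cite{X00}) and used as a black box, so there is no internal argument to compare yours against. Your proposal is the standard reproducing-kernel proof of this classical characterization and is essentially correct: exactness on products $\cP^{*}_{2k}\cdot\cP^{*}_{2k}\subseteq \cP_{4k}$ makes evaluation on the antipodal classes injective, minimality makes it bijective, and $\tilde M\tilde C\tilde M=\tilde M$ with $\tilde M$ invertible forces $\tilde C\tilde M=I$, which is exactly (i) and (ii); the converse via orthogonality of the $K_{2k}(x,\cdot)$ and the M\"oller bound (\ref{eq:Moller}) is also sound. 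Three small points to tighten. First, the word ``isometry'' presumes positive weights so that $\sum_{x}c_{x}u_{x}v_{x}$ is an inner product on $\R^{X_{+}}$; all you actually use is injectivity of $r$, which follows from $\sum_{x}c_{x}p(x)^{2}=\int p^{2}W=0\Rightarrow p\equiv 0$ with no sign assumption. Second, the Gram identity only yields $K_{2k}(x,x')=0$ for distinct representatives $x,x'\in X_{+}$; you should add that $K_{2k}$ is even in each argument (it is assembled from even polynomials), which upgrades this to statement (i) for all $x,y\in X$ with $x\ne\pm y$. Third, the equality $w(-x)=w(x)$ that legitimizes $c_{x}=2w(x)$ and turns $c_{x}K_{2k}(x,x)=1$ into the asserted $w(x)=1/(2K_{2k}(x,x))$ is part of M\"oller's structure theorem \cite{M76}, which you correctly invoke but should cite at that step rather than only at the outset.
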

\noindent
We also refer the readers to~\cite{X00} for relationship between minimal 
formulas and reproducing kernels.

Theorem~\ref{thm:Mysovskikh} includes some proven facts that 
the points of a minimal formula of degree $4k+1$
have the antipodality and contain the origin;
see~\cite{M79} for details.
The following result mentions more on the structure of minimal formulas.
\begin{theorem}[\cite{HS09}] 
\label{thm:HS09}
Assume there exists a minimal formula of degree $4k+1$
for a spherically symmetric integral.
Then the following hold:
\begin{enumerate}
\item[{\rm (i)}] The points are distributed over $k$ distinct concentric 
spheres and the origin.
\item[{\rm (ii)}] The weights take a constant on each concentric sphere.
\item[{\rm (iii)}] The points on each concentric sphere are similar to a 
spherical $(2k + 3)$-design.
\end{enumerate}
\end{theorem}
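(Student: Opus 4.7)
The proof rests on Theorem~\ref{thm:Mysovskikh} together with the $O(d)$-invariance of the reproducing kernel $K_{2k}$ that follows from spherical symmetry. Since $W$ depends only on $\|x\|$, the integration pairing on $\cP^*_{2k}(\R^d)$ is invariant under the diagonal action of $O(d)$, so uniqueness of the reproducing kernel forces $K_{2k}(gx, gy)=K_{2k}(x,y)$ for every $g\in O(d)$. In particular $K_{2k}(x,x)$ depends only on $\|x\|$, and Theorem~\ref{thm:Mysovskikh}(ii) immediately gives Part~(ii).

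Writing $X_j := X\cap r_jS^{d-1}$ for the nonzero-radius layers and letting $w_j$ denote their common weight, I would approach Part~(iii) by testing the cubature against
\begin{equation*}
f(x) \;=\; h(x)\prod_{j'\ne j_0}\bigl(\|x\|^2 - r_{j'}^2\bigr)
\end{equation*}
for any harmonic polynomial $h$ of degree $t$ with $1\le t\le 2k+3$. The spherical mean of $h$ vanishes, so the left-hand side of \eqref{eq:ssi2} is zero; on the cubature side the radial factor kills every layer except $X_{j_0}$, and the origin contribution vanishes because $h(0)=0$, leaving only a nonzero scalar multiple of $\sum_{x\in X_{j_0}} h(x)$. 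Exactness of the cubature on $f$ requires $\deg f = t+2(s-1)\le 4k+1$, i.e.\ $t\le 4k+3-2s$; once $s=k$ is known (see below), this range covers all $t\le 2k+3$ and yields the spherical $(2k+3)$-design property of $r_{j_0}^{-1}X_{j_0}$.

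For Part~(i), the lower bound $s\ge k$ follows by specializing the cubature to purely radial test functions $f(x) = p(\|x\|^2)$ with $\deg p\le 2k$: the formula collapses to a positive-weight one-dimensional quadrature for the pushforward measure $r^{d-1}W(r)\,\mathrm{d}r$ on $[0,\infty)$ with nodes $0, r_1^2,\dots, r_s^2$ that is exact up to degree $2k$ in $r^2$, and the classical Gauss bound then yields $s+1\ge k+1$. The matching upper bound $s\le k$ is the principal obstacle. My plan is to bootstrap: first extract a weaker antipodal design property of each $X_j$ by running the test-function argument of Part~(iii) at a smaller $t$ (for which the constraint $t\le 4k+3-2s$ can be exploited without circularity), then invoke a Delsarte--Goethals--Seidel-type lower bound on $|X_j|$ at that weaker strength, and finally combine with the M\"oller equality $|X| = 2\dim \cP^*_{2k}(\R^d)-1$ to force $s\le k$ arithmetically. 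Closing this circle between (i) and (iii) without implicitly using $s\le k$ in the design step is the delicate part; once it is done, the full $(2k+3)$-design property of Part~(iii) is recovered.
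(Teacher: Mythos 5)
Your parts (ii) and (iii) are sound: the $O(d)$-invariance argument for the kernel plus Theorem~\ref{thm:Mysovskikh}(ii) does give constancy of the weights on spheres, and the test functions $h(x)\prod_{j'\ne j_0}(\|x\|^2-r_{j'}^2)$ with $h$ harmonic give the $(2k+3)$-design property once the number $s$ of nonzero-radius layers is known to be $k$. Your lower bound $s\ge k$ via radial test functions and the Gauss bound is also fine. (For the record, this theorem is imported by the present paper from \cite{HS09} without proof, so there is no in-text argument to compare against line by line.)

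The genuine gap is exactly where you flag it: the upper bound $s\le k$. Your proposed ``bootstrap'' through a weakened design property and a Delsarte--Goethals--Seidel bound is not carried out, and it is circular as sketched, since the design strength you can extract depends on $s$ in the first place. The missing idea is much more direct, and the paper itself points at it in the Remark following Lemma~\ref{prop:radius}: the origin belongs to $X$ (this is part of M\"oller's theorem \cite{M76}, subsumed in Theorem~\ref{thm:Mysovskikh}), so condition (i) of Theorem~\ref{thm:Mysovskikh} applied with $y=0$ gives $K_{2k}(x,0)=0$ for every $x\in X\setminus\{0\}$. By Lemma~\ref{lem:kernel} only the $m=0$ terms survive, so
\begin{equation*}
K_{2k}(x,0)=\frac{1}{V_0|S^{d-1}|}\sum_{j=0}^{k}g_{0,j}(\|x\|^2)\,g_{0,j}(0),
\end{equation*}
which is a polynomial of degree exactly $k$ in $\|x\|^2$ (its leading coefficient is a nonzero multiple of $g_{0,k}(0)$, and $g_{0,k}(0)\neq 0$ because the zeros of the orthogonal polynomial $g_{0,k}$ lie in the open interior of the support of the radial measure). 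Hence the squared radii of the nonzero points are among the at most $k$ roots of this polynomial, giving $s\le k$ outright. Combined with your $s\ge k$ this yields part (i), and your argument for part (iii) then closes without any circularity. I recommend replacing the bootstrap paragraph with this two-line kernel argument.
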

\noindent
See also~\cite{VC92} for the structure of two-dimensional minimal formulas 
of degrees $4k+1$.

To calculate the reproducing kernel of $\cP^{*}_{2k} (\R^{d})$,
we have to compute an orthonormal basis of $\cP^{*}_{2k}(\R^{d})$ 
with respect to $\Omega$ for $W$.
Let ${\rm Harm}(\R^{d})$ be the set of all harmonic polynomials.
Let $l$ be a nonnegative integer.
We define ${\rm Harm}_{l} (\R^{d}) = {\rm Harm}(\R^{d}) \cap {\rm 
Hom}_{l}(\R^{d})$. 
Let $\phi_{l, i},\; i = 1, \ldots, \dim ({\rm Harm}_{l} (\R^{d}))$
be an orthonormal basis of ${\rm Harm}_{l} (\R^{d})$ such that
$$
\frac{1}{|S^{d-1}|} \int_{S^{d-1}} \phi_{l_{1}, i_{1}} (x) \phi_{l_{2}, 
i_{2}} (x) 
{\rm d} \rho = \delta_{l_{1}, l_{2}} \delta_{i_{1}, i_{2}}.
$$
It is well known (see, e.g.,~\cite{BB05, S75}) that
\begin{equation*}
\sum_{i = 1}^{\dim ({\rm Harm}_{l} (\R^{d}))}
\phi_{l,i}(x) \phi_{l,i}(y) = 
\frac{d + 2l - 2}{d - 2} C_{l}^{((d- 2)/2)}(\langle x, y \rangle),  
\quad \forall x, y \in S^{d-1},
\end{equation*}
where $C_{l}^{(\lambda)}$ is the Gegenbauer polynomial 
of parameter $\lambda$, which is explicitly given by 
$$
C_{l}^{(\lambda)} (t) = \frac{(-1)^{l}}{2^{l}}
\frac{\Gamma (\lambda + 1/2) \Gamma (l + 2 \lambda)}
{\Gamma (2 \lambda) \Gamma (l + \lambda + 1/2)} 
\frac{(1 - t^{2})^{1/2 - \lambda}}{l!} \frac{d^{l}}{d t^{l}} (1 - t^{2})^{l 
+ \lambda -1/2}. 
$$
Let $k, m$ be nonnegative integers with $k \geq m$.
We consider the polynomial space $\cP^{*}_{2k -2m} (\R)$ 
with respect to the interval $[p, q)$
for the weight function $V_{0}^{-1} r^{d + 4m - 1} W(r)$,   
where $V_{0} = \int_{p}^{q} r^{d - 1} W(r) {\rm d} r$.
Since $\cP^{*}_{2k - 2m} (\R)$ consists only of even polynomials, 
by using Gram-Schmidt's orthonormalization method, 
we can construct an orthonormal basis $g_{2m, j} (r^{2})$, 
$j = 0, \ldots, k-m$, where $g_{2m, j} (r^{2})$
 is a polynomial of degree $j$ in $r^{2}$, such that 
\begin{equation*}
\delta_{j, j'} = 
\frac{1}{V_{0}} \int_{p}^{q} g_{2m, j} (r^{2}) g_{2m, j'} (r^{2}) 
r^{d + 4m - 1} W(r) {\rm d} r. 
\end{equation*}
By recalling Eq.(\ref{eq:ssi2}), we obtain
\begin{align*}
& \int_{\Omega}
(V_{0} |S^{d-1}|)^{-1/2} \phi_{2 m_{1}, i_{1}} (x) g_{2 m_{1}, j_{1} 
}(\|x\|^{2}) 
\cdot 
(V_{0} |S^{d-1}|)^{-1/2} \phi_{2 m_{2}, i_{2}} (x) g_{2 m_{2}, j_{2} 
}(\|x\|^{2}) 
W(\|x \|) {\rm d} x \\
=& 
\frac{1}{|S^{d-1}|} \int_{S^{d-1}} \phi_{2 m_{1}, i_{1}} (x) \phi_{2m_{2}, 
i_{2}} (x) {\rm d} \rho
\cdot \frac{1}{V_{0}} \int_{p}^{q} g_{2m_{1}, j_{1}} (r^{2}) g_{2m_{2}, 
j_{2}} (r^{2}) 
r^{d + 4m - 1} W (r) {\rm d} r  \\
=& 
\delta_{i_{1}, i_{2}} \delta_{j_{1}, j_{2}} \delta_{m_{1}, m_{2}}.
\end{align*}
Hence
$(V_{0} |S^{d-1}|)^{-1/2} \phi_{2 m, i} (x) g_{2 m, j }(\|x\|^{2})$ 
form an orthonormal basis  
of $\cP^{*}_{2k} (\R^{d})$ with respect to $\Omega$ for $W$;
see~\cite{BB05, HS09}.
\begin{lemma}[\cite{HS09}]
\label{lem:kernel}
Let $d \geq 3$.
The reproducing kernel of $\cP^{*}_{2k} (\R^{d})$ 
with respect to $\Omega$ for $W$ is given as follows: 
\begin{align*}
& K_{2k} (x, y)  \\
&=  \left\{ \begin{array}{ll}
\displaystyle
\frac{1}{V_{0} |S^{d-1}|} 
\sum_{m = 0}^{k} \sum_{j = 0}^{k - m} 
g_{2 m, j }(\|x\|^{2}) g_{2 m, j }(\|y\|^{2}) (\| x \| \| y \|)^{2m} 
\frac{d + 4 m - 2}{d - 2} 
C_{2m}^{((d-2)/2)} \bigg ( \frac{\langle x, y \rangle}{\| x \| \| y \|} 
\bigg) 
& \text{if $x, y \in \R^{d} \setminus \{ 0 \}$,} \\
\displaystyle
\frac{1}{V_{0} |S^{d-1}|} 
\sum_{j = 0}^{k} 
g_{0, j }(\|x\|^{2}) g_{0, j }(\|y\|^{2})
& \text{if $x = 0$}. \\
\end{array} \right.
\end{align*} 
\end{lemma}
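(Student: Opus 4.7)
The plan is to exhibit the reproducing kernel as a sum of products over the explicit orthonormal basis of $\cP^{*}_{2k}(\R^{d})$ constructed immediately before the statement of the lemma, and then to simplify the inner sum over spherical harmonics via the Gegenbauer addition formula recorded in the preliminaries. As a first step I would invoke the uniqueness of the reproducing kernel to write
\[
K_{2k}(x,y) = \frac{1}{V_{0}|S^{d-1}|}\sum_{m=0}^{k}\sum_{j=0}^{k-m}g_{2m,j}(\|x\|^{2})\,g_{2m,j}(\|y\|^{2})\sum_{i}\phi_{2m,i}(x)\phi_{2m,i}(y),
\]
using the fact that the factorization of each basis element into a radial factor $g_{2m,j}(\|x\|^{2})$ and a harmonic factor $\phi_{2m,i}(x)$ leaves the double sum in this separated form.

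For the generic case $x,y\neq 0$, the next step is to exploit the homogeneity $\phi_{2m,i}(x)=\|x\|^{2m}\phi_{2m,i}(x/\|x\|)$, pull the factor $(\|x\|\|y\|)^{2m}$ out of the sum over $i$, and apply the addition formula already displayed in the preliminaries to obtain
\[
\sum_{i}\phi_{2m,i}(x)\phi_{2m,i}(y) = (\|x\|\|y\|)^{2m}\frac{d+4m-2}{d-2}C_{2m}^{((d-2)/2)}\left(\frac{\langle x,y\rangle}{\|x\|\|y\|}\right).
\]
Substituting this back into the expression for $K_{2k}(x,y)$ immediately yields the first branch of the claimed formula.

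For $x=0$ the addition formula cannot be invoked directly because $x/\|x\|$ is undefined, so this case must be handled separately. Here I would observe that each $\phi_{2m,i}$ is homogeneous of positive degree whenever $m\geq 1$, so $\phi_{2m,i}(0)=0$ and only the $m=0$ summand survives. Since $\dim({\rm Harm}_{0}(\R^{d}))=1$ with normalized generator $\phi_{0,1}\equiv 1$, the surviving contribution collapses to $(V_{0}|S^{d-1}|)^{-1}\sum_{j=0}^{k}g_{0,j}(0)g_{0,j}(\|y\|^{2})$, matching the second branch after writing $g_{0,j}(0)=g_{0,j}(\|x\|^{2})$. The whole argument is a direct expansion of a known orthonormal basis; the only point requiring real care is this separate treatment of the origin, dictated by the vanishing of the factor $\|x\|^{2m}$ there, and I do not expect any substantial obstacle.
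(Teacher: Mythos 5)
Your proposal is correct and follows essentially the same route the paper intends: it expands the kernel over the orthonormal basis $(V_{0}|S^{d-1}|)^{-1/2}\phi_{2m,i}(x)g_{2m,j}(\|x\|^{2})$ constructed in the preliminaries, applies the Gegenbauer addition formula after factoring out $(\|x\|\|y\|)^{2m}$ by homogeneity, and correctly isolates the $m=0$ term at the origin. The paper itself delegates the proof to \cite{HS09}, but the derivation immediately preceding the lemma is exactly this argument, so there is nothing to add.
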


The following lemma is useful for
calculating reproducing kernels in 
Section~\ref{sect:4}.
\begin{lemma}
\label{lem:SV}
Let $\mathcal{K}_{1}$ be the reproducing kernel of $\cP_{2k}^{*}(\R)$
with respect to $[1, \infty)$ for a weight function $\mu$ and 
$\mathcal{K}_{2}$ be the reproducing kernel of $\cP_{2k}^{*}(\R)$
with respect to $(0, 1]$ for a weight function $\gamma(y)= y^{- 4k - 2} 
\mu(1/y)$.
Then for any $x, y \in (0,1]$ we have
$$
\mathcal{K}_2(x,y)=(xy)^{2k} \mathcal{K}_1 
\left(\frac{1}{x},\frac{1}{y}\right),
$$
or equivalently, for $x, y \in [1,\infty)$,
$$
\mathcal{K}_1(x,y) = (xy)^{2k} \mathcal{K}_2 
\left(\frac{1}{x},\frac{1}{y}\right).
$$
\end{lemma}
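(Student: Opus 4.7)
The plan is to prove the identity by verifying directly that the proposed formula satisfies the two defining properties of a reproducing kernel and then invoking uniqueness.

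First, I would observe that the transformation $p(x) \mapsto \tilde p(x) := x^{2k} p(1/x)$ is an involution on $\cP_{2k}^{*}(\R)$: if $p(x) = \sum_{j=0}^{k} a_j x^{2j}$, then $\tilde p(x) = \sum_{j=0}^{k} a_j x^{2k - 2j}$, which is again an even polynomial of degree at most $2k$. This is the structural fact that makes the claim plausible, because it relates polynomials on $[1,\infty)$ and polynomials on $(0,1]$ via the involution $x \leftrightarrow 1/x$ inside $\cP_{2k}^{*}(\R)$.

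Next, set $\tilde{\mathcal{K}}(x,y) := (xy)^{2k}\, \mathcal{K}_{1}(1/x, 1/y)$ for $x,y \in (0,1]$. I would check first that, for each fixed $y$, the function $x \mapsto \tilde{\mathcal{K}}(x,y)$ lies in $\cP_{2k}^{*}(\R)$: since $\mathcal{K}_{1}(\cdot, 1/y)$ is an even polynomial of degree at most $2k$ in its first argument, the factor $x^{2k}\mathcal{K}_{1}(1/x, 1/y)$ is precisely the involution above applied to that polynomial, so it is again in $\cP_{2k}^{*}(\R)$.

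The main step is to verify the reproducing property. For arbitrary $q \in \cP_{2k}^{*}(\R)$ and $y \in (0,1]$, I would compute $\int_{0}^{1} q(u)\,\tilde{\mathcal{K}}(u,y)\,\gamma(u)\,du$ and make the change of variables $t = 1/u$, so that $du = -dt/t^{2}$ and the interval $(0,1]$ is mapped to $[1,\infty)$. The weight factor $\gamma(u) = u^{-4k-2}\mu(1/u)$ combines with the Jacobian and the factor $(uy)^{2k}$ in $\tilde{\mathcal{K}}$ so that the integral becomes
\begin{equation*}
y^{2k} \int_{1}^{\infty} \bigl(t^{2k} q(1/t)\bigr)\, \mathcal{K}_{1}(t, 1/y)\, \mu(t)\, dt.
\end{equation*}
Since $t^{2k} q(1/t) = \tilde q(t) \in \cP_{2k}^{*}(\R)$, the reproducing property of $\mathcal{K}_{1}$ on $[1,\infty)$ yields $\tilde q(1/y) = y^{-2k} q(y)$, and the $y^{2k}$ prefactor cancels, giving $q(y)$ as required. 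By uniqueness of the reproducing kernel of $\cP_{2k}^{*}(\R)$ with respect to $\gamma$ on $(0,1]$, we conclude $\mathcal{K}_{2} = \tilde{\mathcal{K}}$; the second form of the identity follows by interchanging the roles of $x$ and $1/x$.

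The only slightly delicate point is bookkeeping the weight after the substitution, ensuring that the combination $u^{-4k-2}\mu(1/u)\cdot (uy)^{2k} \cdot (dt/t^{2})$ collapses cleanly to $y^{2k}\mu(t)\,dt$; once this is carefully tracked, the rest is a direct application of the reproducing property and the involution on $\cP_{2k}^{*}(\R)$.
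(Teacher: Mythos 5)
Your proof is correct. It differs from the paper's in where the key change of variables is applied: the paper works at the level of an orthonormal basis, showing that if $R_j(x^2)$ is an orthonormal basis of $\cP_{2k}^{*}(\R)$ on $[1,\infty)$ for $\mu$, then $P_j(y)=y^{2k}R_j(1/y^2)$ is an orthonormal basis on $(0,1]$ for $\gamma$, and then reads off the identity from the representation $\mathcal{K}=\sum_j P_j\otimes P_j$ together with uniqueness. You instead work directly at the level of the kernel: you define the candidate $\tilde{\mathcal{K}}(x,y)=(xy)^{2k}\mathcal{K}_1(1/x,1/y)$, note via the involution $p\mapsto x^{2k}p(1/x)$ on $\cP_{2k}^{*}(\R)$ that $\tilde{\mathcal{K}}(\cdot,y)$ lies in the space, and verify the reproducing property $\int_0^1 q(u)\tilde{\mathcal{K}}(u,y)\gamma(u)\,du=q(y)$ by the same substitution $t=1/u$, concluding by uniqueness. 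The underlying computation (the weight $u^{-4k-2}\mu(1/u)$ absorbing the Jacobian and the $(uy)^{2k}$ factor) is the same in both arguments, and your bookkeeping checks out: $(y/t)^{2k}\,t^{4k+2}\,t^{-2}=y^{2k}t^{2k}$, and $\tilde q(1/y)=y^{-2k}q(y)$ cancels the prefactor. Your version is basis-free and does not require invoking the sum representation of the kernel, only its two defining properties; the paper's version has the side benefit of exhibiting the transformed orthonormal basis explicitly, which is reused implicitly when the kernel is computed in Lemma 4.2. Both are complete.
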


\begin{proof}
Let $R_{j} (x^{2}), j=0,\ldots, k$ be polynomials of degree $j$ in $x^{2}$,   
which form an orthonormal basis of $\cP_{2k}^{*} (\R)$
with respect to $[1, \infty)$ for $\mu$.
Then by changing variable $x = 1/y$, we get
$$
\delta_{j,j'} 
= \int_1^{\infty} R_{j}(x^{2})R_{j'}(x^{2}) \mu (x) {\rm d}x
=\int_0^1 \left (y^{2k} R_j \left (\frac{1}{y^{2}} \right) \right) 
 \left (y^{2k} R_{j'} \left (\frac{1}{y^{2}} \right) \right) 
\frac{ \mu (1/y )}{y^{4k + 2}} {\rm d}y.
$$
Let $P_j (y)= y^{2k} R_j (1/ y^{2})$. 
Then $P_{j} (y) \in \cP_{2k}^{*}(\R)$, and from the assumption
$\gamma(y)= y^{- 2 - 4k} \mu(1/y)$, 
$\int_0^1P_j (y)P_{j'} (y) \gamma(y) {\rm d} y = \delta_{j, j'}$, i.e.,
$P_j$ are an orthonormal basis of $\cP_{2k}^{*} (\R)$
with respect to $(0, 1]$ for $\gamma$
and hence 
$$
\mathcal{K}_2 (x,y) 
= \sum_{j=0}^{k} P_j( x)P_j (y) 
= (xy)^{2k} \sum_{j=0}^{k} R_j \left (\frac{1}{x^{2}} \right)R_j \left( 
\frac{1}{y^{2}} \right).
$$
Since $x,y\in (0,1]$, we have $1/x^{2},1/y^{2} \in [1,\infty)$.
Hence from the fact $R_{j} (1/x^{2})$ are an orthonormal basis of 
$\cP_{2k}^{*} (\R)$
with respect to $[1, \infty)$ for $\mu$, 
it follows that $\sum_{j=0}^{k} R_j (1/x^{2}) R_j(1/y^{2})$  
is a reproducing kernel of $\cP_{2k}^{*} (\R)$
with respect to $[1, \infty)$ for $\mu$.
The result follows by the uniqueness of reproducing kernels of Hilbert space.
\qquad
\end{proof}

\section{Characterizing the structure of minimal formulas}
\label{sect:3}
In this section we develop a theory of minimal formulas of degree $4k+1$ by unifying
the Mysovskikh theorem and a generalization of 
the Larman-Rogers-Seidel theorem~\cite{N10}.

First we introduce the Larman-Rogers-Seidel theorem \cite{LRS77}. 
\begin{theorem}
Let $X$ be a finite set in $\mathbb{R}^d$ with only two Euclidean distances
$a_1, a_2$ ($a_1<a_2$) between any two distinct points. 
If $|X|$ is greater than $2d+3$, 
then $a_1^2/a_2^2=(m-1)/m$ where $m$ 
is a positive integer bounded above by $1/2+\sqrt{d/2}$. 
\end{theorem}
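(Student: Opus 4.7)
The plan is to establish both parts of the conclusion---the form $a_1^2/a_2^2 = (m-1)/m$ and the bound $m \le 1/2+\sqrt{d/2}$---by first reducing to a spherical two-distance set and then extracting integer constraints from a linear algebra and counting argument.

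The first step, which I expect to be the main obstacle, is to show that the hypothesis $|X|>2d+3$ forces $X$ to lie on a sphere (after a suitable translation). The standard device is the ``annihilator'' polynomial $F(t)=(t-a_1^2)(t-a_2^2)$: the matrix $M_{ij}=F(\|x_i-x_j\|^2)$ equals $a_1^2 a_2^2\,I$ and hence has rank $|X|$, while upon substituting $\|x_i-x_j\|^2=\|x_i\|^2+\|x_j\|^2-2\langle x_i,x_j\rangle$ and expanding, $M$ lies in a matrix subspace of dimension at most $\binom{d+2}{2}$. A careful refinement of this rank count, invoking the threshold $2d+3$, pins the norms $\|x_i\|$ down to a common value after a translation. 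Normalizing, I may then assume $X\subset S^{d-1}$ with the two values of inner product being $\beta_s = 1 - a_s^2/2$ $(s=1,2)$, so that $\beta_1>\beta_2$ and $a_1^2/a_2^2 = (1-\beta_1)/(1-\beta_2)$.

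Once on the sphere, I apply the polynomial method directly. For each $x_i\in X$, the function $F_i(y) = (\langle y,x_i\rangle-\beta_1)(\langle y,x_i\rangle-\beta_2)$ restricted to $S^{d-1}$ is of degree at most $2$ and satisfies $F_i(x_j) = (1-\beta_1)(1-\beta_2)\,\delta_{ij}$, so the $F_i$ are linearly independent in the $\binom{d+2}{2}$-dimensional space of degree-$\le 2$ polynomials on the sphere; this yields the cardinality bound $|X|\le\binom{d+2}{2}$, from which the inequality $m\le 1/2+\sqrt{d/2}$ will eventually drop out as a quadratic estimate once $|X|$ is expressed in terms of $m$ and $d$. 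For the number-theoretic conclusion, I introduce the valencies $n_s^{(i)} = |\{j\ne i : \langle x_i,x_j\rangle = \beta_s\}|$ and write down two moment identities: summing $\langle x_i,x_j\rangle$ and $\langle x_i,x_j\rangle^2$ over $j\ne i$ and using the centroid vanishing $\sum_i x_i=0$ (itself a consequence of $|X|>2d+3$, via the same polynomial-space dimension comparison) together with a second-moment identity, I solve the linear system
\[
n_1^{(i)}+n_2^{(i)} = N-1,\qquad n_1^{(i)}\beta_1 + n_2^{(i)}\beta_2 = -1,
\]
for $n_1^{(i)},n_2^{(i)}$ and conclude that they are nonnegative integer constants $n_1,n_2$ independent of $i$.

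Finally, eliminating $\beta_1,\beta_2$ from these identities gives the ratio $a_1^2/a_2^2$ as a rational function of $(N,d,n_1,n_2)$, whose reduced denominator is forced to be an integer $m$ because $n_1,n_2$ are integers; a direct manipulation rearranges this into the form $(m-1)/m$. The upper estimate $m\le 1/2+\sqrt{d/2}$ then follows by substituting the constrained values of $\beta_1,\beta_2$ back into $|X|\le\binom{d+2}{2}$ and solving for $m$.
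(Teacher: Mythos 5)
First, a point of reference: the paper does not prove this statement at all --- it is quoted from Larman--Rogers--Seidel \cite{LRS77} as a known result --- so your attempt has to be measured against the classical proof rather than against anything in the text.

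Your proposal has a genuine gap: the mechanism by which the hypothesis $|X|>2d+3$ produces an \emph{integer} $m$ never appears in it. The classical argument works directly in $\R^d$, with no reduction to a sphere: set $m=a_2^2/(a_2^2-a_1^2)$ and $F_i(y)=(\|y-x_i\|^2-a_2^2)/(a_1^2-a_2^2)$. Each $F_i$ lies in the $(d+2)$-dimensional span of $\|y\|^2,y_1,\dots,y_d,1$, so the matrix $\bigl(F_i(x_j)\bigr)=mI+A$, where $A$ is the $0$--$1$ adjacency matrix of the $a_1$-distance graph, has rank at most $d+2$; hence $-m$ is an eigenvalue of $A$ with multiplicity at least $N-d-2\ge N/2$. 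If $m$ were irrational, its algebraic conjugates would be eigenvalues of the same multiplicity, which $N\ge 2d+4$ forbids (the borderline case $N=2d+4$ is killed by trace identities); being a rational eigenvalue of an integer matrix, $-m$ is a rational integer, and $a_1^2/a_2^2=(m-1)/m$ follows by rearrangement. Your proposal replaces this by (i) an unproved claim that $|X|>2d+3$ forces $X$ onto a sphere, (ii) an unproved claim that the centroid then vanishes, and (iii) integrality of the valencies $n_1,n_2$. Even granting (i) and (ii), step (iii) cannot reach the conclusion: from $n_1+n_2=N-1$ and $n_1\beta_1+n_2\beta_2=-1$ you learn only that $\bigl((N-1)\beta_1+1\bigr)/(\beta_1-\beta_2)$ and $\bigl(-(N-1)\beta_2-1\bigr)/(\beta_1-\beta_2)$ are integers, which says nothing about $(1-\beta_2)/(\beta_1-\beta_2)=a_2^2/(a_2^2-a_1^2)$; and even rationality of $a_1^2/a_2^2$ is far weaker than the assertion that its numerator and denominator in lowest terms differ by exactly $1$. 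The sentence ``a direct manipulation rearranges this into the form $(m-1)/m$'' is where the entire content of the theorem is hidden. Likewise, $m\le 1/2+\sqrt{d/2}$ does not drop out of $|X|\le\binom{d+2}{2}$ as you suggest; it needs a separate eigenvalue/counting estimate. The missing idea, in one phrase: $-a_2^2/(a_2^2-a_1^2)$ is an eigenvalue of an integer matrix with multiplicity exceeding $N/2$.
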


Nozaki~\cite{N10} extended this theorem
for points with more Euclidean distances. 
\begin{theorem} \label{thm:LRStype}
Let $X$ be a finite set in $\mathbb{R}^d$ with only 
$s$ Euclidean distances 
$a_1,\ldots,a_s$ 
between any two distinct points. 
If $|X| \geq 2 \binom{d+s-1}{s-1}+2 \binom{d+s-2}{s-2}$, 
then for each $i = 1,\ldots s$,
\[
\prod_{j=1,2,\ldots,s, j \ne i} \frac{a_j^2}{a_j^2-a_i^2}
\]
is an integer, whose absolute value is bounded by some function of $d$ and 
$s$. 
\end{theorem}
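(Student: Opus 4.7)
The strategy is to reduce the integrality claim to an eigenvalue multiplicity computation for a $0/1$ matrix and then to invoke Galois theory. For each fixed $i$, I introduce the Lagrange interpolation polynomial
\[
L_i(t) = \prod_{j \ne i} \frac{t - a_j^2}{a_i^2 - a_j^2},
\]
so that $L_i(a_j^2) = \delta_{ij}$ and $L_i(0) = \prod_{j \ne i} a_j^2/(a_j^2 - a_i^2)$ is exactly the quantity we want to show is an integer. For $x,y \in X$, the value $L_i(\|x-y\|^2)$ equals $L_i(0)$ when $x=y$, equals $1$ when $\|x-y\|=a_i$, and equals $0$ otherwise. Hence if $A_i$ denotes the adjacency matrix of the graph on $X$ whose edges are the point pairs at mutual distance $a_i$, the $|X|\times|X|$ matrix $M := [L_i(\|x-y\|^2)]_{x,y \in X}$ decomposes as $M = A_i + L_i(0)\,I$.

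The next step is to bound $\mathrm{rank}(M)$ by writing $L_i(\|x-y\|^2)$ as a sum of separable products $f(x)g(y)$. Using $\|x-y\|^2 = \|x\|^2 + \|y\|^2 - 2\langle x,y\rangle$ together with $\langle x,y\rangle^c = \sum_{|\mu|=c}\binom{c}{\mu} x^\mu y^\mu$, a direct expansion shows that each $x$-factor lies in the subspace
\[
\mathcal{V} := \sum_{a=0}^{s-1} \|x\|^{2a}\,\cP_{s-1-a}(\R^d).
\]
Applying the harmonic decomposition $\cP_m(\R^d) = \bigoplus_{2j \le m}\|x\|^{2j}\,\mathrm{Harm}_{m-2j}(\R^d)$, one rewrites
\[
\mathcal{V} = \bigoplus_{\ell + j \le s-1}\|x\|^{2j}\,\mathrm{Harm}_\ell(\R^d),
\]
and a hockey-stick computation gives $\dim\mathcal{V} = \binom{d+s-1}{s-1} + \binom{d+s-2}{s-2}$. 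Consequently $\mathrm{rank}(M)\le\dim\mathcal{V}$, so $-L_i(0)$ is an eigenvalue of $A_i$ of multiplicity at least $|X| - \dim\mathcal{V}$, which by the hypothesis on $|X|$ is at least $\dim\mathcal{V}$.

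Finally, since $A_i$ has entries in $\{0,1\}$, its characteristic polynomial lies in $\Z[t]$, so $-L_i(0)$ is an algebraic integer and its Galois conjugates over $\Q$ appear as eigenvalues of $A_i$ with the same multiplicity. If $-L_i(0)$ were irrational, a nontrivial conjugate would contribute at least another $|X| - \dim\mathcal{V}$ eigenvalues, inflating the total spectrum of $A_i$ beyond $|X|$, a contradiction. Hence $-L_i(0)\in\Z$, proving the integrality claim. An absolute bound on $|L_i(0)|$ depending only on $d,s$ follows by combining $\mathrm{tr}(A_i) = 0$ with $\mathrm{tr}(A_i^2) = 2|E_i|$ and the rank bound on $M$. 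The main obstacle is the rank computation, in particular the clean identification of $\mathcal{V}$ and the verification of its dimension via the harmonic decomposition; a secondary delicate point is the borderline case $|X| = 2\dim\mathcal{V}$, where the Galois-multiplicity argument is tight and one must exploit the structural constraint $A_i^2 = cI + c'A_i$ (with $c,c' \in \Z$) that is forced on $A_i$ to rule out irrational eigenvalues.
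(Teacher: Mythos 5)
A preliminary remark: the paper offers no proof of this theorem --- it is imported verbatim from Nozaki's preprint \cite{N10} --- so the only meaningful comparison is with the argument there, and your proposal reconstructs essentially that argument. The Lagrange polynomial $L_i$, the identity $M=A_i+L_i(0)I$, the factorization of $L_i(\|x-y\|^2)$ into separable products whose $x$-factors lie in $\mathcal{V}=\sum_{a=0}^{s-1}\|x\|^{2a}\mathcal{P}_{s-1-a}(\mathbb{R}^d)$, the dimension count $\dim\mathcal{V}=\binom{d+s-1}{s-1}+\binom{d+s-2}{s-2}=:N$, and the conclusion that $\theta:=-L_i(0)$ is an eigenvalue of the integer matrix $A_i$ with multiplicity $m\ge|X|-N$ are all correct and are exactly the right ingredients.

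The genuine gap is the borderline case $|X|=2N$, which you flag as ``delicate'' but do not close, and which cannot be skipped: the hypothesis is $|X|\ge 2N$ with equality permitted, and precisely there your contradiction (``inflating the total spectrum beyond $|X|$'') evaporates, since an irrational $\theta$ of degree two over $\mathbb{Q}$ with conjugate $\theta'$ could a priori fill the spectrum exactly, each with multiplicity $N=|X|/2$. (Degree higher than two is already excluded by $e(|X|-N)\le|X|$.) The missing step is short but essential: in that case $\theta,\theta'$ are the \emph{only} eigenvalues, $\mathrm{tr}(A_i)=0$ forces $\theta'=-\theta$, so the minimal polynomial of $A_i$ divides $t^2-\theta^2$ and $A_i^2=\theta^2I$ with $\theta^2=-\theta\theta'$ a rational integer --- note the constraint is $A_i^2=cI$ with $c'=0$, not the general $cI+c'A_i$ you invoke. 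Reading off the entries of $A_i^2$, the distance-$a_i$ graph is $\theta^2$-regular and no two distinct vertices share a neighbour; a vertex of degree at least $2$ would hand its two neighbours a common neighbour, so $\theta^2\le1$ and $\theta\in\{0,\pm1\}\subseteq\mathbb{Z}$, contradicting irrationality ($\theta=0$ contradicts $a_i$ being realized). Separately, your one-line derivation of the bound $|L_i(0)|\le F(d,s)$ is too quick: $\mathrm{tr}(A_i^2)=2|E_i|\le|X|^2$ alone yields only $\theta^2\le|X|^2/m$, which grows with $|X|$. You also need $\mathrm{tr}(A_i)=0$ together with the fact that at most $N$ eigenvalues (with multiplicity) differ from $\theta$: by Cauchy--Schwarz, $m^2\theta^2=\bigl(\sum_{\lambda\ne\theta}\lambda\bigr)^2\le N\,\mathrm{tr}(A_i^2)\le N|X|^2$, and $m\ge|X|-N\ge|X|/2$ then gives $\theta^2\le 4N$, a bound depending only on $d$ and $s$.
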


Now, let $r_{1}, \ldots, r_{p}$ be positive real numbers. 
Let $Y$ be a set in $\R^{d}$ supported by
$S_{r_1}^{d-1}, \cdots, S_{r_{p}}^{d-1}$, 
where $S_{r_i}^{d-1}$ is the $(d-1)$-dimensional sphere
of radius $r_i$.
We denote the {\it $i$th layer} of $Y$ by $Y_{i}$, namely,  
$Y_{i} = Y \cap S_{r_i}^{d-1}$. 
Let $A(Y_i) = \{\langle x, y\rangle / r_i^2 \mid x, y \in Y_i, \; x \ne y 
\}$.

We can prove various theorems similar to Theorem \ref{thm:LRStype} 
for antipodal finite sets which are supported by a sphere.
\begin{theorem}[\cite{N10}] \label{thm:anti}
Let $s$ be an integer with $s \geq 4$. 
Let $Y$ be an antipodal finite set in $S_{1}^{d-1}$ with
$s$ inner products between any two distinct points.
If $|Y| \geq 4 \binom{d+s-3}{s-2}+2$, then every inner product is rational. 
\end{theorem}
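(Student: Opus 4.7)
The plan is to adapt the polynomial-method proof behind Theorem \ref{thm:LRStype}, exploiting the antipodal $\pm 1$-symmetry of $Y$ to halve the dimension of the relevant function space and thereby sharpen the required lower bound on $|Y|$. For each $i \in \{1, \ldots, s\}$ I would form the Lagrange interpolation polynomial
\[
F_i(t) = \prod_{j \ne i} \frac{t - \alpha_j}{\alpha_i - \alpha_j}
\]
of degree $s-1$ with $F_i(\alpha_j) = \delta_{ij}$, and consider the $|Y| \times |Y|$ matrix $M_i = \bigl(F_i(\langle x, y \rangle)\bigr)_{x, y \in Y}$. Off the diagonal, $M_i$ is the $\{0,1\}$-indicator matrix of the relation $\langle x, y \rangle = \alpha_i$; its diagonal entries are all equal to $F_i(1)$.

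Next, I would expand $F_i$ in the Gegenbauer basis $\{C_\ell^{(d-2)/2}\}_{0 \le \ell \le s-1}$ and invoke the addition formula to write $M_i = \sum_\ell c_{i,\ell}\, U_\ell U_\ell^{\top}$, where $U_\ell$ is the matrix of an orthonormal basis of $\operatorname{Harm}_\ell(\R^d)$ evaluated on $Y$. Because $\phi_{\ell,\nu}(-x) = (-1)^\ell \phi_{\ell,\nu}(x)$ and $Y$ is antipodal, each column of $U_\ell$, viewed as a vector in $\R^Y$, lies in the $(-1)^\ell$-eigenspace of the antipodal involution $\sigma \colon x \mapsto -x$. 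Since $M_i$ commutes with $\sigma$, it decomposes along the $\pm 1$-eigenspaces of $\sigma$ (each of dimension $|Y|/2$); by pairing $F_i$ with $F_{i'}$, where $\alpha_{i'} = -\alpha_i$, into an even (respectively odd) polynomial in $t$, the contributions on a fixed $\sigma$-eigenspace come only from Gegenbauer polynomials of one parity. A telescoping harmonic-dimension count
\[
\sum_{\ell \text{ even},\, \ell \le s-2} \dim \operatorname{Harm}_\ell(\R^d) = \binom{d+s-3}{s-2}
\]
(with an analogous identity in the odd case) then bounds the rank of the relevant restriction of $M_i$ by $\binom{d+s-3}{s-2}$.

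Finally, I would compare ranks: the hypothesis $|Y| \ge 4\binom{d+s-3}{s-2}+2$ yields $|Y|/2 \ge 2\binom{d+s-3}{s-2}+1$, forcing a substantial kernel in each $\sigma$-block of $M_i$. Since this block has the form $F_i(1)\, I + A$ with $A$ a $\{0,1\}$-adjacency-type matrix, its eigenvalue multiplicities are nonnegative integers; the resulting integrality constraint forces $F_i(1) \in \Q$ for every $i$. Writing $F_i(1) = \prod_{j \ne i}(1-\alpha_j)/(\alpha_i - \alpha_j)$ and using that $\{\alpha_j : \alpha_j \ne -1\}$ is closed under negation, one extracts enough rational-coefficient algebraic relations on $\alpha_1, \ldots, \alpha_s$ to conclude that each $\alpha_i \in \Q$. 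The main obstacle I anticipate is precisely this last step: promoting the rationality of the Lagrange values $F_i(1)$ to rationality of the individual inner products $\alpha_i$ themselves, rather than merely of symmetric functions or squares of them.
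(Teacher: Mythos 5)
The paper does not actually prove this theorem---it quotes it from \cite{N10}---so I am judging your outline on its own terms. The framework you set up (Lagrange interpolants $F_i$, the addition formula, splitting $\R^Y$ into the $\pm1$-eigenspaces of the antipodal involution, and the fact that an eigenvalue of an integer matrix whose multiplicity exceeds half the size must be a rational integer) is the right machinery, and carried through it does yield $F_i(1)=\prod_{j\ne i}(1-\alpha_j)/(\alpha_i-\alpha_j)\in\Z$ for every $i$. But the step you flag as ``the main obstacle'' is not an obstacle to be finessed---it is impassable with the relations you have. For $s=4$, antipodality forces $A(Y)=\{-1,0,\beta,-\beta\}$, and a direct computation gives $F_{-1}(1)=-1$, $F_{0}(1)=-2(1-\beta^2)/\beta^2$, and $F_{\beta}(1)=F_{-\beta}(1)=1/\beta^2$: every one of your constraints depends on $\beta$ only through $\beta^2$, so $\beta=1/\sqrt{2}$ satisfies all of them while being irrational. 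Similarly for $s=5$ with $A(Y)=\{-1,\pm\beta,\pm\gamma\}$ one finds $F_{-1}(1)=1$ and $F_{-\beta}(1)=-F_{\beta}(1)$, and conjugate quadratic irrationals such as $\beta,\gamma=(1\pm\sqrt{5})/4$ make all five values equal to $\pm1$ or $\pm2$. Since any polynomial of degree at most $s-1$ taking integer values on $A(Y)$ is an integer combination of the $F_i$, no further interrogation of your matrices $M_i$ can extract more than the integrality of the $F_i(1)$, which is essentially Theorem \ref{thm:LRStype} with an improved bound, not the rationality of the $\alpha_i$.

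The missing idea is a second family of interpolants adapted to the half-set. Write $A(Y)\setminus\{-1\}=\{\pm\beta_1,\dots,\pm\beta_m\}$ (possibly together with $0$), choose one point from each antipodal pair to form $Y_0$, and note that inner products between distinct points of $Y_0$ avoid $-1$. On the $(-1)$-eigenspace of $\sigma$, which is naturally indexed by $Y_0$, one runs your rank-versus-multiplicity argument not on $F_i$ but on the odd polynomial $H_i(t)=\tfrac{t}{\beta_i}\prod_{j\ne i}\tfrac{t^2-\beta_j^2}{\beta_i^2-\beta_j^2}$, which takes values in $\{0,\pm1\}$ at all off-diagonal inner products of $Y_0$ and equals $\tfrac{1}{\beta_i}\prod_{j\ne i}\tfrac{1-\beta_j^2}{\beta_i^2-\beta_j^2}$ on the diagonal; the even interpolant $\prod_{j\ne i}\tfrac{t^2-\beta_j^2}{\beta_i^2-\beta_j^2}$ gives $\prod_{j\ne i}\tfrac{1-\beta_j^2}{\beta_i^2-\beta_j^2}$ on the diagonal. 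Both diagonal values are forced to be nonzero rational integers, and their ratio is $1/\beta_i$, whence $\beta_i\in\Q$. Observe that $H_i$ is \emph{not} an integer combination of your $F_j$'s (its expansion carries the coefficient $-1/\beta_i$ on $F_{-1}$), which is precisely why your setup cannot see it; the parity decomposition is needed not just to halve the rank bound, as you use it, but to discard the antipodal pairs so that a polynomial with the non-integral value $H_i(-1)$ becomes admissible. The remainder of your outline---the Gegenbauer expansion, the parity of the contributing harmonics, the count $\binom{d+s-3}{s-2}$, and $|Y|/2\ge 2\binom{d+s-3}{s-2}+1$ forcing multiplicity greater than half---is sound and is exactly what makes both families of interpolants usable.
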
  

Theorem \ref{thm:anti} leads to
the following proposition which is
closely related to minimal formulas of degree $4k+1$.

\begin{proposition}
\label{prop:structure0}
Let $p,d$ be integers such that $p \geq 2, d \geq 4p^{2} - 2p + 1$.
Let $Y$ be an antipodal set supported by $p$ concentric spheres with 
$|Y| = 2 \sum_{i=0}^{p-1} \binom{d + 2p - 1 - 2i}{2p - 2i}$
and for each $i$, $|A (Y_{i})| \leq 2p + 1$. 
Then there exists $Y_{l}$ such that for any $\alpha \in A(Y_l)$, 
$\alpha$ is a rational number. 
\end{proposition}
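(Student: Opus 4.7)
The plan is to apply Theorem~\ref{thm:anti} to one ``big'' layer of $Y$, identified by pigeonhole. After rescaling each $Y_i$ to $S_1^{d-1}$ (which leaves $A(Y_i)$ unchanged), every layer is an antipodal subset of the unit sphere with at most $2p+1$ distinct inner products, so the hypotheses of Theorem~\ref{thm:anti} are in reach; what has to be arranged is the size lower bound.

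First I would run the pigeonhole step. Since $|Y|$ is the sum of the $p$ layer sizes, some layer $Y_l$ satisfies
\begin{equation*}
|Y_l| \;\geq\; \frac{|Y|}{p} \;=\; \frac{2}{p}\sum_{i=0}^{p-1}\binom{d+2p-1-2i}{2p-2i}.
\end{equation*}
Dropping all but the leading $i=0$ term and using the identity $\binom{n+1}{k+1}=\frac{n+1}{k+1}\binom{n}{k}$ gives
\begin{equation*}
|Y_l|\;\geq\;\frac{2}{p}\binom{d+2p-1}{2p}\;=\;\frac{d+2p-1}{p^{2}}\binom{d+2p-2}{2p-1}.
\end{equation*}
The dimension hypothesis $d\geq 4p^{2}-2p+1$ is equivalent to $d+2p-1\geq 4p^{2}$, which upgrades this to $|Y_l|\geq 4\binom{d+2p-2}{2p-1}$; since $p\geq 2$ there is at least one further nonzero summand $\binom{d+1}{2}$ in $|Y|/p$, which provides far more than the required additive slack of $2$, so in fact $|Y_l|\geq 4\binom{d+2p-2}{2p-1}+2$.

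Now I match this against the threshold in Theorem~\ref{thm:anti}. Writing $s=|A(Y_l)|\leq 2p+1$, monotonicity of binomial coefficients yields
\begin{equation*}
4\binom{d+s-3}{s-2}+2 \;\leq\; 4\binom{d+2p-2}{2p-1}+2 \;\leq\; |Y_l|.
\end{equation*}
Thus Theorem~\ref{thm:anti} applies to the rescaled $Y_l\subset S_1^{d-1}$ and declares every inner product in $A(Y_l)$ rational, which is the desired conclusion.

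The only residual obstacle is the assumption $s\geq 4$ in Theorem~\ref{thm:anti}. If the pigeonhole layer happens to have $s_l\in\{2,3\}$, the same rationality conclusion has to be read off instead from the classical Larman--Rogers--Seidel theorem or its three-distance analogue; the size bound produced above easily dominates the much weaker thresholds in those results, so these degenerate cases pose no genuine difficulty. This reduction to a single large layer, tuned exactly against the available LRS-type theorem, is the whole content of the proof, and the dimensional hypothesis $d\geq 4p^{2}-2p+1$ is calibrated precisely to make the pigeonhole count meet that threshold.
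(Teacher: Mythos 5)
Your main line is the same as the paper's: pigeonhole to extract a layer $Y_l$ with
$|Y_l|\ge \tfrac{2}{p}\sum_{i=0}^{p-1}\binom{d+2p-1-2i}{2p-2i}\ge 4\binom{d+2p-2}{2p-1}+2$,
then invoke Theorem~\ref{thm:anti}; your size estimate (keeping the $i=0$ term for the factor $4$, via $d+2p-1\ge 4p^2$, and the $i=1$ term for the additive $2$) is exactly the paper's inequality (\ref{eq:rational0}). The gap is in your last paragraph. Theorem~\ref{thm:anti} requires $s\ge 4$, and you dispose of $s=|A(Y_l)|\in\{2,3\}$ by appealing to ``the classical Larman--Rogers--Seidel theorem or its three-distance analogue.'' But those results do not conclude that the inner products are rational: they conclude that certain expressions in the squared distances, such as $a_1^2/a_2^2=(m-1)/m$ or $\prod_{j\ne i}a_j^2/(a_j^2-a_i^2)$, are rational or integral. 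On a sphere this gives rationality of ratios like $(1-\alpha_1)/(1-\alpha_2)$, from which rationality of the individual $\alpha$'s does not follow without further work (one can in fact extract $1/\alpha\in\Z$ for an antipodal $3$-inner-product set using that $-1$ is one of the inner products, but you neither state nor perform that computation). So as written the degenerate case is asserted, not proved.

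The paper closes precisely this hole differently, and more cleanly: it shows the degenerate case cannot occur. By the Delsarte--Goethals--Seidel absolute bound for antipodal sets, if $|A(Y_l)|\le 2p$ then $|Y_l|\le 2\binom{d+2p-2}{2p-1}$, contradicting $|Y_l|\ge 4\binom{d+2p-2}{2p-1}+2$; hence $|A(Y_l)|=2p+1\ge 5$ and Theorem~\ref{thm:anti} applies with no side cases. Your monotonicity step $4\binom{d+s-3}{s-2}+2\le 4\binom{d+2p-2}{2p-1}+2$ is correct and would let you run the argument for any $s$ with $4\le s\le 2p+1$, but to make the proof complete you should either carry out the $s=3$ computation explicitly (the $s=2$ case is trivial since antipodality forces $A(Y_l)=\{-1,0\}$) or, better, import the absolute bound as the paper does.
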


\begin{proof}
By using the pigeonhole principle, we can choose 
$Y_{l}$ such that
\begin{equation*} 
|Y_{l}| \ge  \frac{2}{p}
\sum_{i=0}^{p-1} \binom{d + 2p - 1 - 2i}{2p - 2i}.
\end{equation*}
By the assumptions of $p$ and $d$,
we obtain 
\begin{align}
\frac{2}{p} \sum_{i=0}^{p-1} \binom{d + 2p - 1 - 2i}{2p - 2i}
& \ge \frac{2}{p}  
\Big( \binom{d + 2p - 1}{2p} + \binom{d + 2p - 3}{2p - 2}  \Big) 
\nonumber \\
& \ge 4 \binom{d+2p-2}{2p-1}+2.
\label{eq:rational0}
\end{align}
If $|A(Y_l)|\leq 2p$,  then $|Y_l| \leq 2 \binom{d+2p-2}{2p-1}$ \cite{DGS77}.
This contradicts (\ref{eq:rational0}), and hence $|A(Y_l)|=2p+1$.
Thus the result follows from Theorem \ref{thm:anti}. 
\qquad\end{proof}

The following theorem presents a number-theoretic characterization of
the structure of minimal formulas of degree $4k + 1$.
\begin{theorem}
\label{thm:structure}
Let $k,d$ be integers such that
$k \ge 2, d \ge 4k^{2} - 2k + 1$.
Assume
there exists a $d$-dimensional minimal formula $X$ of degree $4k + 1$ 
for a spherically symmetric integral.
Then there exists a layer $X_{l}$ such that
every $\alpha \in A(X_l)$ is a rational number.
\end{theorem}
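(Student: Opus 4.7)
The plan is to reduce the theorem to an application of Proposition \ref{prop:structure0} with $p=k$ and $Y=X\setminus\{0\}$. I need to verify the four hypotheses of that proposition: (a) $Y$ is antipodal; (b) $Y$ is supported on exactly $k$ concentric spheres; (c) $|Y|$ matches the required cardinality $2\sum_{i=0}^{k-1}\binom{d+2k-1-2i}{2k-2i}$; (d) each layer satisfies $|A(X_i)|\le 2k+1$.

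For (a) and (b), I would invoke Theorem~\ref{thm:Mysovskikh} (which gives antipodality and $0\in X$) together with Theorem~\ref{thm:HS09}(i) (which confirms the $k$-sphere distribution). For (c), since we are in the degree $4k+1$ case with $2k$ even, the M\"oller bound with $0\in X$ gives $|X|=2\dim\cP^{*}_{2k}(\R^{d})-1$, and a direct count yields $\dim\cP^{*}_{2k}(\R^{d})=\sum_{m=0}^{k}\binom{d+2m-1}{2m}$. Reindexing $m=k-i$ turns $|X|-1=2\sum_{m=1}^{k}\binom{d+2m-1}{2m}$ into $2\sum_{i=0}^{k-1}\binom{d+2k-1-2i}{2k-2i}$, which is exactly the cardinality required in Proposition~\ref{prop:structure0} for $Y=X\setminus\{0\}$.

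The main step, and the one that requires real work, is (d). For this I would use Lemma~\ref{lem:kernel}: when $x,y$ lie on the common sphere $S^{d-1}_{r_i}$, the reproducing kernel simplifies to
\[
K_{2k}(x,y)=\sum_{m=0}^{k}c_{m}(r_{i})\,C_{2m}^{((d-2)/2)}\!\left(\frac{\langle x,y\rangle}{r_{i}^{2}}\right),
\]
where $c_m(r_i)=\frac{r_i^{4m}}{V_0|S^{d-1}|}\frac{d+4m-2}{d-2}\sum_{j=0}^{k-m}g_{2m,j}(r_i^2)^2$ depends only on $r_i$. Viewed as a polynomial in $t=\langle x,y\rangle/r_i^2$, this has degree $2k$. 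By Mysovskikh's Theorem~\ref{thm:Mysovskikh}, $K_{2k}(x,y)=0$ for every pair $x,y\in X_i$ with $x\ne\pm y$, so every such $t$ is a root of this fixed degree-$2k$ polynomial; together with the antipodal value $t=-1$ this yields $|A(X_i)|\le 2k+1$.

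With (a)--(d) in hand, Proposition~\ref{prop:structure0} applied to $Y=X\setminus\{0\}$ produces a layer $X_l$ on which every $\alpha\in A(X_l)$ is rational, finishing the proof. The only conceptual obstacle is (d); once one recognizes that restricting $K_{2k}$ to a single concentric sphere reduces it to a univariate polynomial of degree $2k$ in the normalized inner product, the bound on $|A(X_i)|$ is immediate, and the rest is bookkeeping to match the numerical hypotheses of Proposition~\ref{prop:structure0}.
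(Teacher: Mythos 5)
Your proposal is correct and follows essentially the same route as the paper: both reduce the theorem to Proposition~\ref{prop:structure0} with $p=k$ by combining the antipodality and $k$-layer structure, the M\"oller cardinality count, and the key observation that on a fixed layer the reproducing kernel becomes a nonzero even polynomial of degree $2k$ in the normalized inner product, forcing $|A(X_i)|\le 2k+1$. The only cosmetic difference is that you derive this last fact explicitly from the Gegenbauer expansion in Lemma~\ref{lem:kernel}, whereas the paper cites the statement that $K_{2k}$ is a polynomial of degree $k$ in $\|x\|^2,\|y\|^2,\langle x,y\rangle^2$.
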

 
\begin{proof}
By Theorem~\ref{thm:HS09}
the points $X$ are distributed over $k$ distinct concentric spheres and the origin.
By (\ref{eq:Moller}) and \cite[Lemma 1.7]{B06},
we have
$$
|X \setminus \{ 0 \}| = 2 \sum_{i=0}^{k-1} \binom{d + 2k - 1 - 2i}{2k - 2i}.
$$
By Lemma~2.3 in~\cite{HS09}, 
the reproducing kernel of $\cP_{2k}^{*}(\R^{d})$ is a polynomial of degree $k$
in variables $\| x \|^2, \| y \|^2, \langle x, y \rangle^2$.
If $x, y \in X,\;x \ne \pm y$, lie on the same layer,
then by Theorem~\ref{thm:Mysovskikh}, 
$K_{2k}(x,y)$ is regarded as a polynomial of degree at most $k$
in one variable $\langle x, y \rangle^2$.
This implies $|A(X_{i})| \leq 2k + 1$ for each layer $X_i$.
Hence the result follows by Proposition~\ref{prop:structure0}. 
\qquad\end{proof}

{\em Remark}.
Bannai and Damerell \cite{BD80} proved that
there exists no tight spherical $t$-design on
$S^{d-1}$ for $t \geq 8$ and $d \geq 4$
except for some examples.
A key idea of their proof is to note that
the inner products of points of a tight spherical design
form the zeros of a certain orthogonal polynomial, 
and moreover to show the rationality of
these zeros by the theory of association scheme.
In this sense Theorem~\ref{thm:structure} is similar to
Bannai and Damerell's approach for Euclidean design.

The usefulness of Theorem~\ref{thm:structure} will be clear in Section~\ref{sect:4}.

\section{Applying the basic theory}
\label{sect:4}
Here we use the same notations as in the previous sections.
We consider the following integral:
\begin{align}
\label{eq:ssi}
\int_\Omega f(x) W(\|x\|) {\rm d} x
&= \int_{ \{ x \in \mathbb{R}^d \mid 1 \le \| x \| < \infty \}}
f(x)  \frac{\sqrt{\| x \|^2-1}}{\| x \|^{4k+d+2}} {\rm d} x \nonumber \\
&= \int_1^\infty \bigg(\int_{S^{d-1}} f(r x) {\rm d} \rho \bigg)
\sqrt{r^2 - 1} \ r^{-(4k+3)} {\rm d} r.
\end{align}
This can be seen as a multidimensional generalization of 
the two-dimensional integral Xu~\cite{X98} focused on,
though, of course, many other generalizations can also be considered.
The purpose of this section is to prove the following theorem.

\begin{theorem}
\label{thm:main}
There exists no $d$-dimensional minimal formula of degree $13$ and $21$
for the integral~(\ref{eq:ssi}) for any $d \ge 2$.
\end{theorem}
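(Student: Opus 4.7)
The strategy is to apply Theorem~\ref{thm:structure} to turn the existence question into a rationality question about the inner products on a single layer, and then to derive a contradiction from an explicit form of the reproducing kernel produced by the Chebyshev structure of~(\ref{eq:ssi}).

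\textbf{Step 1 (reduction to Chebyshev data).} For the integral~(\ref{eq:ssi}), the radial weight entering the orthonormalization of $g_{2m,j}$ in Lemma~\ref{lem:kernel} is $\mu_m(r)=r^{d+4m-1}W(r)=\sqrt{r^2-1}\,r^{-4(k-m)-3}$ on $[1,\infty)$. Applying Lemma~\ref{lem:SV} with $k$ replaced by $k-m$ maps it to the weight $\gamma_m(y)=\sqrt{1-y^2}$ on $(0,1]$, i.e.\ to the Chebyshev weight of the second kind. Thus $g_{2m,j}(r^2)$ can be written in closed form by Chebyshev polynomials of the second kind evaluated at $1/r$, and substituting into Lemma~\ref{lem:kernel} gives an explicit expression for $K_{2k}(x,y)$ as an algebraic combination of those Chebyshev values at $1/\|x\|,1/\|y\|$ and of the Gegenbauer values $C_{2m}^{((d-2)/2)}(\langle x,y\rangle/(\|x\|\|y\|))$.

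\textbf{Step 2 (rationality contradiction in large dimensions).} Fix $k\in\{3,5\}$ and suppose a minimal formula $X$ exists for some $d\ge 4k^2-2k+1$. By Theorem~\ref{thm:structure}, there is a layer $X_l$ with $A(X_l)\subset\Q$. Antipodality and Theorem~\ref{thm:Mysovskikh} show that the restriction of $K_{2k}(x,y)$ to pairs on $X_l$ is an even polynomial of degree $2k$ in $t=\langle x,y\rangle/r_l^2$ whose $2k$ nontrivial roots are exactly the elements of $A(X_l)\setminus\{-1\}$; writing this polynomial as $G(t^2)$ with $\deg G=k$, the $k$ positive inner products satisfy $\alpha_1^2,\ldots,\alpha_k^2\in\Q\cap(0,1)$. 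Using the explicit coefficients of $G$ from Step~1, together with the inter-layer Mysovskikh conditions $K_{2k}(x,y)=0$ for $x\in X_l,y\in X_{l'}$ ($l'\ne l$) that algebraically constrain $r_l^2$, a rational-root analysis should rule out every admissible $k$-tuple $(\alpha_1^2,\ldots,\alpha_k^2)\in(\Q\cap(0,1))^k$, producing the desired contradiction.

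\textbf{Step 3 (small dimensions and main obstacle).} For the remaining range of $d$ below the threshold $4k^2-2k+1$ of Theorem~\ref{thm:structure}, I would work directly with the explicit kernel of Step~1, checking that the overdetermined system $K_{2k}(x,y)=0$ together with the positive-weight identity $w(x)=1/(2K_{2k}(x,x))$ of Theorem~\ref{thm:Mysovskikh}(ii) has no consistent solution for the finite list of remaining $(k,d)$; a computer-algebra check should suffice. The hardest point is the rationality step of Step~2: the coefficients of $G$ lie a priori in $\Q[r_l^2]$, not in $\Q$, so one must either eliminate $r_l^2$ via the inter-layer relations or argue Galois-theoretically that simultaneous rationality of $k$ roots in $(0,1)$ forces $r_l^2$ into an overly restrictive algebraic set incompatible with the rest of the system. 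Controlling the Gegenbauer coefficients $C_{2m}^{((d-2)/2)}$ uniformly in $d$, and genuinely using the fact that $k$ roots are rational (not just one), will be the key technical input.
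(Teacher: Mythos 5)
Your skeleton (explicit Chebyshev kernel via Lemma~\ref{lem:SV}, Theorem~\ref{thm:structure} to obtain a layer with rational inner products, then a rationality contradiction, with a separate finite check for small $d$) matches the paper's, but the step you yourself flag as the ``main obstacle'' is a genuine gap, and the paper closes it with an ingredient your outline is missing: the radii are \emph{not} free parameters to be eliminated or treated Galois-theoretically. Lemma~\ref{prop:radius} determines them explicitly: the moment conditions $\int_\Omega \|x\|^{2\ell}W(\|x\|)\,{\rm d}x=\sum_i\Lambda_iR_i^{\ell}$ for $\ell=0,\dots,2k-1$ force the quantities $(2-R_i)R_i^{-1}$ to be the nodes of the degree-$(2k-1)$ Gaussian quadrature for the weight $\sqrt{(1-t)/(1+t)}$ on $[-1,1]$, giving $r_l^2=(\cos(l\pi/(2k+1)))^{-2}$ in closed form. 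Substituting this into the kernel, the single-layer Mysovskikh condition $K_{2k}(x,y)=0$ becomes a polynomial identity in $c:=\cos(2l\pi/(2k+1))$ with coefficients in $\Q[A_{\sqrt{R_l}},d]$ (Eq.~(\ref{eq:proof2})). Since $2k+1\in\{7,11\}$ is prime, the minimal polynomial of $c$ over $\Q$ has degree $k$ and is independent of $l$; reducing modulo it and invoking the rationality of $A_{\sqrt{R_l}}$ (this is precisely where Theorem~\ref{thm:structure} is consumed) forces all $k$ remainder coefficients to vanish, yielding $k$ polynomial equations in the single unknown $A_{\sqrt{R_l}}$. These are jointly inconsistent: they force $A_{\sqrt{R_l}}=1/(d+2)$ and then $d\in\{-1,-6\}$ for $k=3$, respectively a strict sign contradiction for $k=5$. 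Without the explicit radii, your ``rational-root analysis'' in Step~2 cannot get started, and nothing in your proposal supplies them.

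Two smaller divergences are worth noting. First, the paper needs only one rational value $A_{\sqrt{R_l}}$ at a time, not the simultaneous rationality of all $k$ roots of your polynomial $G$, so the Galois-theoretic refinement you anticipate is unnecessary. Second, for the dimensions below the threshold of Theorem~\ref{thm:structure} the paper does not run a consistency check on the full Mysovskikh system; it uses an integrality obstruction, computing $|X_1|=\Lambda_1/w_1$ in closed form from the explicit radii and weights (Lemma~\ref{lem:small-dim}) and bounding it between two non-integer rationals for $3\le d\le 30$ ($k=3$) and $3\le d\le 90$ ($k=5$), with $d=2$ settled by citing a known two-dimensional result. Your proposed computer-algebra consistency check is not obviously wrong, but as written it is unsubstantiated and considerably heavier than what is actually required.
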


To prove Theorem~\ref{thm:main} we use some lemmas.

\begin{lemma} 
\label{prop:repro}
Let $d \geq 3$.
The reproducing kernel of $\cP^{*}_{2k} (\R^{d})$ with respect to 
(\ref{eq:ssi})
is given as follows:
$$
K_{2k} (x, y) 
= \frac{4 (\| x\| \| y \|)^{2k}}{\pi (d - 2) |S^{d-1}|} 
\sum_{m = 0}^{k} \sum_{j = 0}^{k - m}
(d + 4m - 2)
C_{2j}^{(1)} \bigg ( \frac{1}{\| x \|} \bigg )  C_{2j}^{(1)} \bigg ( 
\frac{1}{\| y \|} \bigg )
C_{2 m}^{((d -2)/2)} \bigg ( \frac{\langle x, y \rangle}{\| x \| \| y \|} 
\bigg ), 
$$
for any $x, y \in \R^{d} \setminus \{ 0 \}$.
\end{lemma}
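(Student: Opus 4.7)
My plan is to plug the specific weight $W(r) = \sqrt{r^{2}-1}/r^{4k+d+2}$ into Lemma~\ref{lem:kernel} and reduce everything to computing the univariate orthonormal bases $g_{2m,j}$ by pulling the problem back to the bounded interval $(0,1]$ via Lemma~\ref{lem:SV}. The clean form of the answer — in particular the fact that the outer $(\|x\|\|y\|)^{2k}$ factor is independent of $m$, and that a single family of Chebyshev-type polynomials $C_{2j}^{(1)}$ appears — is the signal that after the inversion $r\mapsto 1/r$ the $m$- and $k$-dependence in the weight should cancel. This cancellation is the heart of the proof.

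More concretely, the first step is to apply Lemma~\ref{lem:SV} with the parameter $k$ there replaced by $k-m$: that is, I would compare the orthonormalization of polynomials in $r^{2}$ of degree at most $k-m$ on $[1,\infty)$ with respect to $V_{0}^{-1}r^{d+4m-1}W(r)$ against the corresponding problem on $(0,1]$ with weight $\gamma(y) = y^{-4(k-m)-2}\mu(1/y)$, where $\mu(r)=V_{0}^{-1}r^{d+4m-1}W(r)$. The second step is the key computation: substituting $W(1/y) = \sqrt{1-y^{2}}\,y^{4k+d+1}$ and simplifying exponents shows that
\[
\gamma(y) \;=\; V_{0}^{-1}\sqrt{1-y^{2}},
\]
which depends on neither $m$ nor $k$. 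This is the step I expect to be the main technical obstacle, because one has to track several $r^{\alpha}$-type factors and verify they combine exactly to produce the $m$-free Chebyshev-type weight.

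Once $\gamma$ is identified as the weight of the Chebyshev polynomials of the second kind on $(0,1]$, I would invoke the standard fact $C_{n}^{(1)}=U_{n}$ together with symmetry on $[-1,1]$ to conclude that the orthonormal basis on $(0,1]$ is $P_{j}(y) = 2\sqrt{V_{0}/\pi}\,C_{2j}^{(1)}(y)$ for $j=0,\ldots,k-m$. Running the inversion in Lemma~\ref{lem:SV} backwards then yields
\[
g_{2m,j}(r^{2}) \;=\; 2\sqrt{V_{0}/\pi}\;r^{2(k-m)}\,C_{2j}^{(1)}(1/r).
\]

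The final step is bookkeeping: inserting this $g_{2m,j}$ into Lemma~\ref{lem:kernel}, the product $g_{2m,j}(\|x\|^{2})g_{2m,j}(\|y\|^{2})(\|x\|\|y\|)^{2m}$ collapses to $(4V_{0}/\pi)(\|x\|\|y\|)^{2k}C_{2j}^{(1)}(1/\|x\|)C_{2j}^{(1)}(1/\|y\|)$, the factor $V_{0}$ cancels against the $1/V_{0}$ prefactor of the kernel, and regrouping the remaining constants $\frac{4}{\pi(d-2)|S^{d-1}|}$ and the factor $d+4m-2$ from the Gegenbauer weight produces exactly the stated expression. No additional ingredient is required beyond the two preceding lemmas and the classical orthogonality of $U_{2j}$ on $[-1,1]$.
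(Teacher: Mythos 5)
Your proposal is correct and follows essentially the same route as the paper's proof: pull the weighted orthonormalization back from $[1,\infty)$ to $(0,1]$ via Lemma~\ref{lem:SV} (with $k$ replaced by $k-m$), observe that the transformed weight collapses to the $m$-independent Chebyshev weight $V_{0}^{-1}\sqrt{1-y^{2}}$ so that the basis is $2\sqrt{V_{0}/\pi}\,C_{2j}^{(1)}$, and substitute the resulting $g_{2m,j}$ into Lemma~\ref{lem:kernel}. Your write-up is in fact slightly more explicit than the paper's about the exponent bookkeeping that makes $\gamma$ independent of $m$ and $k$, which is indeed the crux of the cancellation.
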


\begin{proof}
We use the same notation $g_{2m,j}$ as in Section~\ref{sect:2}.
We consider the polynomial space $\cP_{2k - 2m}^{*}(\R)$ 
with respect to the interval $(0, 1]$ for the weight function $\sqrt{1 - 
r^{2}}$.
By the orthogonality of the Gegenbauer polynomial, 
$2 \sqrt{V_{0}/\pi} C^{(1)}_{2 j} (r)$  
is an orthonormal basis 
of $\cP_{2k - 2m}^{*}(\R)$ with respect to $(0, 1]$ for $\sqrt{1 - r^{2}}$.
Applying Lemma~\ref{lem:SV}, we have
$\sum_{j = 0}^{k - m} g_{2m, j} (s^{2}) g_{2m, j} (t^{2}) = 
4 \pi^{-1} V_{0} (s t)^{2 (k - m)} 
\sum_{j = 0}^{k - m} C^{(1)}_{2j} (1/s) C^{(1)}_{2j} (1/t)$.
Hence the desired result follows by Lemma~\ref{lem:kernel}.
\qquad \end{proof}

The following lemma determines the layers
of minimal formulas for the integral (\ref{eq:ssi}).
\begin{lemma}
\label{prop:radius}
Assume
there exists a minimal formula of degree $4k+1$ for the integral 
(\ref{eq:ssi}).
Then the points are distributed over the origin and 
$k$ concentric spheres of radii
$( \cos (i\pi/(2k+1)) )^{-1}$
and the sums of weights on each concentric spheres are 
$\pi (2k + 1)^{-1}|S^{d-1}| ( \sin (i \pi/(2k + 1)) )^{2}
(\cos (i \pi/(2k + 1)) )^{4k}$,
where $i = 1, \cdots, k$.
\end{lemma}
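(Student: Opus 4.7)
The plan is to reduce the minimal-formula identity on radial test functions to a one-dimensional Gauss--Chebyshev quadrature of the second kind. By Theorem~\ref{thm:HS09}, a $d$-dimensional minimal formula of degree $4k+1$ has all of its nonorigin points on $k$ concentric spheres of some radii $r_1,\dots,r_k$, with the weight constant on each sphere. Write $W_i$ for the sum of the weights on the $i$th sphere and $W_0$ for the weight at the origin. Applying the cubature identity to $f(x)=p(\|x\|^2)$ with $\deg p\le 2k$ and using the spherical decomposition in (\ref{eq:ssi}) reduces the cubature to the one-dimensional statement
$$W_0\,p(0)+\sum_{i=1}^k W_i\,p(r_i^2)=|S^{d-1}|\int_1^\infty p(r^2)\sqrt{r^2-1}\,r^{-(4k+3)}\,dr\qquad(\deg p\le 2k).$$

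Next, I would apply the substitution $t=1/r$, in the spirit of Lemma~\ref{lem:SV}, to transform the right-hand side into $|S^{d-1}|\int_0^1\tilde p(t)\sqrt{1-t^2}\,dt$, where $\tilde p(t):=t^{4k}p(1/t^2)$ is an even polynomial in $t$ of degree at most $4k$. Setting $v_i:=1/r_i\in(0,1)$ gives $p(r_i^2)=v_i^{-4k}\tilde p(v_i)$, while $p(0)$ becomes the coefficient of $t^{4k}$ in $\tilde p$. Splitting $\tilde p(t)=p(0)\,t^{4k}+\hat p(t)$ with $\hat p$ even of degree $\le 4k-2$ isolates, on one hand, a single scalar identity involving $W_0+\sum_i W_i$, and, on the other, a $k$-node quadrature rule
$$\sum_{i=1}^k\lambda_i\,\hat p(v_i)=|S^{d-1}|\int_0^1\hat p(t)\sqrt{1-t^2}\,dt,\qquad\lambda_i:=W_i\,v_i^{-4k},$$
exact on all even polynomials $\hat p$ of degree at most $4k-2$.

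Since the Mysovskikh weights are positive, all $\lambda_i$ are positive, so the uniqueness of Gaussian quadrature applies. Extending by evenness to $[-1,1]$ produces a symmetric $2k$-node quadrature exact on $P_{4k-1}$ for the weight $\sqrt{1-t^2}$, which must coincide with the Gauss--Chebyshev rule of the second kind: nodes $\cos(j\pi/(2k+1))$, $j=1,\dots,2k$, with weights $(\pi/(2k+1))\sin^2(j\pi/(2k+1))$. Folding back to $(0,1)$ gives $v_i=\cos(i\pi/(2k+1))$, hence $r_i=1/\cos(i\pi/(2k+1))$, and $\lambda_i=(|S^{d-1}|\pi/(2k+1))\sin^2(i\pi/(2k+1))$, yielding
$$W_i=v_i^{4k}\lambda_i=\frac{\pi}{2k+1}|S^{d-1}|\sin^2\!\left(\frac{i\pi}{2k+1}\right)\cos^{4k}\!\left(\frac{i\pi}{2k+1}\right),$$
as claimed. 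The main subtlety is that naive counting (a $(k+1)$-node Gauss--Radau-type rule on radial polynomials of degree $\le 2k$) seems to give one node too many; the resolution is that the origin corresponds to $t=\infty$ under $t=1/r$ and is absorbed into the coefficient of $t^{4k}$ in $\tilde p$, cleanly isolating a genuine $k$-node Gaussian quadrature on $(0,1)$.
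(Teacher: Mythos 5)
Your proof is correct. The overall strategy is the one the paper uses---test the cubature against radial polynomials and identify the resulting one-dimensional rule with a classical Gauss--Chebyshev quadrature whose nodes and weights are known in closed form---but your reduction runs differently. The paper works with the moments of $\|x\|^{2\ell}$ for $\ell=1,\dots,2k$ (so the origin term drops out automatically), substitutes $r\to\sqrt{2/(t+1)}$, and recognizes the $k$-point Gauss rule for the fourth-kind Chebyshev weight $\sqrt{(1-t)/(1+t)}$ on $[-1,1]$, exact to degree $2k-1$, whose nodes $\cos(2i\pi/(2k+1))$ and weights are quoted from the literature; the half-angle formula then yields the radii. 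You instead invert via $t=1/r$ (in the spirit of Lemma~\ref{lem:SV}), absorb the origin node into the coefficient of $t^{4k}$, and after symmetrizing land on the $2k$-point Gauss--Chebyshev rule of the second kind, exact to degree $4k-1$. The two reductions are equivalent under $t\mapsto 2t^2-1$ and of course produce the same $v_i=\cos(i\pi/(2k+1))$ and the same weight sums. Two minor remarks: uniqueness of the Gaussian rule does not require positivity of your $\lambda_i$, only that the $2k$ symmetric nodes $\pm v_i$ be distinct, which follows from Theorem~\ref{thm:HS09}(i) since the $k$ spheres are distinct and the $r_i$ are finite; and your device for eliminating the apparent extra node at the origin plays exactly the role of the paper's restriction to moments with $\ell\ge 1$, so the node count comes out right in both arguments.
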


{\em Proof}.
We note that for each $\ell = 0, \cdots, 2k$,
\begin{align}
\int_\Omega \|x\|^{2\ell} W (\| x \|) {\rm d} x
=& \int_{S^{d-1}} {\rm d} \rho \ \int_1^\infty r^{2\ell} \ \sqrt{r^2 - 1} \ 
r^{-(4k+3)} 
{\rm d} r \nonumber \\
=& \frac{|S^{d-1}|}{8} \ \int_{-1}^1 \bigg(\frac{t+1}{2}\bigg)^{2k-\ell-1} 
\ \sqrt{1 - t^2} {\rm d} t,
\label{eq:transform1}
\end{align}
where the last equality follows by standard calculations 
including changing variable $r \rightarrow \sqrt{2/(t+1)}$.
By Theorem~\ref{thm:HS09}
the points $X$ are distributed over the origin and
$k$ concentric spheres $S_{r_1}^{d-1},\cdots,S_{r_k}^{d-1}$.
Let $\Lambda_i = \sum_{x \in X \cap S_{r_i}^{d-1}} w(x)$ and $R_i = r_i^2$.
Then the leftmost term of Eq.(\ref{eq:transform1}) can be represented as
$\sum_{i = 1}^{k + 1} \Lambda_{i} R_{i}^\ell$.
Thus 
$$
\int_{-1}^{1} \bigg ( \frac{t + 1}{2} \bigg )^{2k - 1 - l} 
\sqrt{\frac{1 - t}{1 + t}} {\rm d} t = 
\sum_{i = 1}^{k} \bigg (\frac{4}{|S^{d-1}|} \Lambda_{i} R_{i}^{2k} \bigg) 
\big (R_{i}^{-1} \big )^{2k -1 - l}, 
\quad l = 0, \ldots, 2k - 1.
$$
By letting $m = 2k - 1 - l$,
$\tilde R_{i} = (2 - R_{i}) R_{i}^{-1},
\tilde \Lambda_{i} = 4 |S^{d-1}|^{-1} \Lambda_{i} R_{i}^{2k}$, 
this formula is, equivalently,
$$
\int_{-1}^{1} \bigg ( \frac{t + 1}{2} \bigg )^{m} 
\sqrt{\frac{1 - t}{1 + t}} {\rm d} t = 
\sum_{i = 1}^{k} \tilde \Lambda_{i} \bigg (\frac{\tilde R_{i} + 1}{2} \bigg)^{m}, 
\quad m = 0, \ldots, 2k - 1.
$$
This is the Gaussian quadrature formula of degree $2k-1$ 
for the integral 
$\pi^{-1} \int_{-1}^{1} f(t) \sqrt{\frac{1- t}{1 + t}} {\rm d} t$.
It is well known (see, e.g., \cite{K66}) that
the points and weights in the Gaussian quadrature formula of degree $2k-1$
are given explicitly for the univariate integral as follows:
$$
\tilde R_{i} = \cos \frac{2 i \pi}{2k + 1} 
\quad \text{and} \quad
\tilde \Lambda_{i} = \frac{4 \pi}{2k + 1} \left ( \sin \frac{i \pi}{2k + 1} 
\right)^{2},
$$
where $i = 1, \ldots, k$.
By using the half angle formula of the cosine function, 
we obtain 
$R_{i} = r_{i}^{2} = (\cos (i \pi/(2k + 1)))^{-2}$.
Moreover, the sums of weights on each concentric spheres are 
$$
\Lambda_{i} = \frac{|S^{d-1}|}{4} R_{i}^{-2k} \tilde{\Lambda}_{i} = 
\frac{\pi |S^{d-1}|}{2k + 1}  \left ( \sin \frac{i \pi}{2k + 1} \right)^{2}
\left ( \cos \frac{i \pi}{2k + 1} \right)^{4k}. \qquad\endproof
$$ 

{\em Remark}.
It is basically possible to calculate
the radii of layers of minimal formulas
of degree $4k+1$,
by taking $y = 0$ in Theorem~\ref{thm:Mysovskikh}~(i)
and solving the equation of the norm $\| x \|$.
However for ordinary integrals, 
it seems to be rare to get
^^ ^^ compact forms" of the radii of layers
even for small values of $k$.
Here ^^ ^^ compact forms" mean that
the radii take very complicated forms
involving imaginary numbers and power roots, which
are not desirable when we further discuss
the nonexistence of minimal formulas:
For example,
it is useful to know the inner products of points in
minimal formulas when discussing the nonexistence
of minimal formulas. To do so,
finding compact forms of radii is useful as well;
see~\cite{BB10,HS09,VC92} for details.
From the proof of Theorem~\ref{thm:main} below,
the readers will also see
how useful such compact forms are.

{\it Proof of Theorem~\ref{thm:main}}. 
Assume there exists a $d$-dimensional minimal formula $X$ of
degree $4k + 1$ 
for the integral (\ref{eq:ssi}).
Let $x, y \in X_l$ with $x \ne \pm y$
and $A_{\sqrt{R}_l} = \langle x, y \rangle^{2} / R_l^{2}$.
By Theorem~\ref{thm:Mysovskikh}, 
Lemma~\ref{prop:repro} and the half angle formula of the cosine 
function, 
for each $d \geq 3$, we have
\begin{align}
0 =& \sum_{\substack{i + j = k \\ i, j \geq 0}} 
\frac{d + 4j - 2}{4^{k}}
C_{2i}^{(1)} \bigg ( \sqrt{\frac{1 + \cos (2l\pi/(2k + 1))}{2}} \bigg )^{2}
C_{2 j}^{((d -2)/2)} \bigg ( A_{\sqrt{R_{l}}}^{1/2} \bigg ).
\label{eq:proof2}
\end{align}
We regard Eq.(\ref{eq:proof2}) as an equation of $\cos (2 l \pi/(2k + 1))$.

{\it (i) The Case of $k = 3$}.
We assume $d \geq 31$.
According to Theorem~\ref{thm:structure}, 
we can choose some layer of the formula, say $X_{l}$, over which
the points have rational distances.
Let $f(x) = x^3 + \frac{1}{2} x^2 - \frac{1}{2} x - \frac{1}{8} \in 
\mathbb{Q}[x]$ 
be the minimal polynomial of $\cos (2 l \pi/7)$ \cite{WZ93}.
When dividing the right hand side of Eq.(\ref{eq:proof2}) by $f(\cos (2 l \pi/7))$,
the remainder is
\begin{align*}
& \frac{1}{384} d (d + 2) \{ (d + 4) (d + 6) A_{\sqrt{R_l}}^{2}
 - 6 (d + 4)  A_{\sqrt{R_l}} + 3  \}
 \cos \Big( \frac{2 l \pi}{7}\Big)^{2} \nonumber \\
& + \frac{1}{384} d \{ (d + 2) (d + 4) (d + 6) A_{\sqrt{R_l}}^{2}
 - 6 (d + 2) (d + 3) A_{\sqrt{R_l}} + 3 d \}
 \cos \Big( \frac{2 l \pi}{7}\Big)  \nonumber \\
&+ \frac{1}{46080} d \Big \{ 
(d + 2) (d + 4) (d + 6) (d + 8) (d + 10) A_{\sqrt{R_l}}^{3} 
- 15 (d + 2) (d + 4) (d + 6)^{2} A_{\sqrt{R_l}}^{2} \nonumber \\
& \qquad \qquad \qquad + 45 (d + 2) (d^{2} + 6 d + 24) A_{\sqrt{R_l}} - 15 (d^{2} + 44) \Big \}.
\end{align*}
By the minimality of $f$, we have
\begin{align}
0 =& (d + 4) (d + 6) A_{\sqrt{R_l}}^{2}  - 6 (d + 4)  A_{\sqrt{R_l}} + 3. 
\nonumber \\
0 =& (d + 2) (d + 4) (d + 6) A_{\sqrt{R_l}}^{2} - 6 (d + 2) (d + 3) 
A_{\sqrt{R_l}} + 3 d. \nonumber \\
0 =& (d + 2) (d + 4) (d + 6) (d + 8) (d + 10) A_{\sqrt{R_l}}^{3} 
- 15 (d + 2) (d + 4) (d + 6)^{2} A_{\sqrt{R_l}}^{2} \nonumber \\
& \qquad\qquad + 45 (d + 2) (d^{2} + 6 d + 24) A_{\sqrt{R_l}} - 15 (d^{2} + 44). 
\label{eq:proof3}
\end{align}
By multiplying the first equation in (\ref{eq:proof3}) by $d+2$ and 
then subtracting the new equation from the second equation in Eq.(\ref{eq:proof3}),
we see that $A_{\sqrt{R_l}} = 1/(d + 2)$.
Substituting this value of $A_{\sqrt{R_l}}$ into the first equation in 
Eq.(\ref{eq:proof3}) implies that $d = -1, -6$.
This is a contradiction.
We deal with the remaining cases 
$2 \leq d \leq 30$ 
in Appendix~\ref{sect:A}.

{\it (ii) The Case of $k = 5$}.
We assume 
$d \geq 91$.
According to Theorem~\ref{thm:structure},
we can choose some layer of the formula, say $X_{l}$, over which
the points have rational distances.
Let $f(x) = x^{5} + \frac{1}{2} x^{4} - x^{3} 
- \frac{3}{8} x^{2} + \frac{3}{16} x + \frac{1}{32} \in \mathbb{Q}[x]$ 
be the minimal polynomial of $\cos (2 l \pi/ 11)$ \cite{WZ93}.
When dividing the right hand side of Eq.(\ref{eq:proof2}) by $f(\cos (2 l \pi/11))$,
the remainder is
\begin{align*}
c_{0} +
c_{1}  \cos \Big ( \frac{2l \pi}{11} \Big ) +
c_{2}  \cos \Big ( \frac{2l \pi}{11} \Big )^{2} + 
c_{3}  \cos \Big ( \frac{2l \pi}{11} \Big )^{3} +
c_{4}  \cos \Big ( \frac{2l \pi}{11} \Big )^{4},
\end{align*}
where 
\begin{align*}
c_{0} =& \frac{1}{3715891200} d 
\{
 (d + 2) (d + 4) (d + 6) (d + 8) (d + 10) (d + 12) (d + 14) (d + 16) (d + 
18) A_{\sqrt{R_{l}}}^{5}
 \nonumber \\
 & \quad - 45 (d + 2) (d + 4) (d + 6) (d + 8) (d + 10) (d + 12) (d + 
14)^{2} A_{\sqrt{R_{l}}}^{4}
 \nonumber \\
 &\quad + 630 (d + 2) (d + 4) (d + 6) (d + 8) (d + 10) (d^{2} + 22d + 128) 
A_{\sqrt{R_{l}}}^{3}
 \nonumber \\
&\quad  - 3150 (d + 2) (d + 4) (d + 6) (d + 8) (d^{2} + 16 d + 84) 
A_{\sqrt{R_{l}}}^{2}
\nonumber \\
& \quad + 4725 (d + 2) (d^{2} + 6 d + 32) (d^{2} + 14 d + 72) 
A_{\sqrt{R_{l}}}
-945 (d^{4} + 10 d^{3} + 100 d^{2} + 440 d + 4384)
\}, \nonumber \\
c_{1} =& \frac{1}{10321920} d 
\{ (d + 2) (d + 4) (d + 6) (d + 8) (d + 10) (d + 12) (d + 14) 
A_{\sqrt{R_{l}}}^{4}
\nonumber \\
& \quad - 28 (d + 2) (d + 4) (d + 6) (d + 8) (d + 10) (d + 14) 
A_{\sqrt{R_{l}}}^{3}
+ 210 (d + 2) (d + 4) (d + 6) (d + 8) (d + 14) A_{\sqrt{R_{l}}}^{2}
\nonumber \\
& \quad - 420 (d + 2) (d^{3} + 24 d^{2} + 164 d + 312) A_{\sqrt{R_{l}}}
+ 105 (d^{3} + 20 d^{2} + 92 d + 16)
\}, 
\nonumber \\
c_{2} =& \frac{1}{10321920} d (d + 2)
\{ (d + 4) (d + 6) (d + 8) (d + 10) (d + 12) (d + 14) A_{\sqrt{R_{l}}}^{4}
\nonumber \\
& \quad - 28 (d + 4) (d + 6) (d + 8) (d + 10) (d + 14) A_{\sqrt{R_{l}}}^{3}
+ 210 (d + 4) (d + 6) (d^{2} + 22 d + 128) A_{\sqrt{R_{l}}}^{2}
\nonumber \\
& \quad - 420 (d + 4) (d^{2} + 20 d + 132) A_{\sqrt{R_{l}}} 
+ 105 (d^{2} + 18 d + 152)\}, \\
c_{3} =& \frac{1}{46080} d (d + 2) 
\{ (d + 4) (d + 6) (d + 8) (d + 10) A_{\sqrt{R_{l}}}^{3} 
-15 (d + 4) (d + 6) (d + 7) A_{\sqrt{R_{l}}}^{2}
\nonumber \\
&\quad + 45 (d + 4)^{2} A_{\sqrt{R_{l}}} - 15 (d + 1)\}, 
\nonumber \\
c_{4} =&  \frac{1}{46080} d (d + 2) (d + 4)
\{ (d + 6) (d + 8) (d + 10)A_{\sqrt{R_{l}}}^{3} 
-15 (d + 6) (d + 8) A_{\sqrt{R_{l}}}^{2} 
+ 45 (d + 6) A_{\sqrt{R_{l}}} -15\}.
\nonumber \\
\end{align*}
By the minimality of $f$, we have
$
c_{0} = 0, c_{1} = 0, c_{2} = 0, 
c_{3} = 0, c_{4} =0,
$
and therefore
\begin{align*}
f_{3, 4} := c_{3} - c_{4} =& \frac{1}{3072} 
\{ (d^{3} + 12 d^{2} + 44 d + 48) A_{\sqrt{R_{l}}}^{2}
- 6 (d^{2} + 6 d + 8) A_{\sqrt{R_{l}}} + 3 (d + 2) \}, \\
f_{2, 1} := c_{2} - c_{1} =& \frac{1}{3072} 
\{ (d^{3} + 12 d^{2} + 44 d + 48) A_{\sqrt{R_{l}}}^{2}
- 3 (2d^{2} + 13 d + 18) A_{\sqrt{R_{l}}} + 3 (d + 3) \}.
\end{align*}
Since
$$
0 = f_{3, 4} - f_{2,1} = \frac{d \{ (d + 2) A_{\sqrt{R_{l}}} - 1 \}}{1024},
$$
we obtain $A_{\sqrt{R_{l}}} = 1 /(d+2)$.
By substituting $A_{\sqrt{R_{l}}}$ into $f_{3,4}$, we obtain
\begin{align*}
f_{3, 4}  = - \frac{d (d + 1) (d + 6)}{1536 (d + 2)} < 0.
\end{align*}
This is a contradiction.
We deal with the remaining cases $2 \leq d \leq 90$ 
in Appendix~\ref{sect:A}.

{\em Remark}.
When $2k + 1$ is a prime number,
the minimal polynomial of $\cos (2\pi l/(2k+1))$ 
is uniquely determined, 
not depending on $l$ \cite{WZ93}.
Using this fact, we proved
there exist no minimal formulas of
degrees $13$ and $21$ for the integral (\ref{eq:ssi}).
Similar but more extensive considerations will
enable us to discuss the nonexistence of
minimal formulas for other degrees, though
we do not pursue it here.

\section{Conclusion}
\label{sect:5}
Unifying the Mysovskikh theorem and
a generalization of the Larman-Rogers-Seidel theorem,
we obtained a new necessary condition for
the existence of minimal formulas
of degree $4k+1$ for spherically symmetric integrals.
And thereby we solved the existence problem of
minimal formulas of degrees $13$ and $21$
for the integral (\ref{eq:ssi}).
In this paper we only focused on the particular integral.
We hope that
our approach could also be applied
for other spherically symmetric integrals.

\appendix
\section{Small dimensional cases of the proof of Theorem \ref{thm:main}}
\label{sect:A}

In this section we finish off the remaining cases of the proof of Theorem~\ref{thm:main}.
The following lemma plays a key role.
\begin{lemma}
\label{lem:small-dim}
Let $d \geq 3$.
Assume there exists a $d$-dimensional minimal formula 
of degree $4k + 1$ for the integral (\ref{eq:ssi}).
Then, 
\begin{equation}
\frac{8}{2k + 1}
\sum_{m = 0}^{k} \sum_{j = 0}^{k - m}
\bigg ( \sin \frac{(2j + 1) \pi}{2k + 1} \bigg )^{2} 
\bigg \{ \binom{d + 2m - 1}{2m} - \binom{d + 2m - 3}{2m - 2} \bigg\} 
\label{eq:integer}
\end{equation}
is an integer.
\end{lemma}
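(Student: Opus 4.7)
The plan is to show that the displayed expression equals exactly the cardinality $|X_1|$ of the innermost layer of the minimal formula $X$, whence integrality is automatic from $|X_1|\in\Z_{>0}$.

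First, by Theorem~\ref{thm:Mysovskikh}(ii) and Theorem~\ref{thm:HS09}(ii), the weights are constant on each concentric sphere $X_i$ and equal $w_i = 1/(2K_{2k}(x,x))$ for any $x \in X_i$. Hence
\[
|X_i| = \frac{\Lambda_i}{w_i} = 2\Lambda_i K_{2k}(x,x),
\]
where $\Lambda_i$ is the sum of weights on layer $X_i$. By Lemma~\ref{prop:radius}, $1/\|x\| = \cos(i\pi/(2k+1))$ and
\[
\Lambda_i = \frac{\pi |S^{d-1}|}{2k+1}\sin^2\!\Big(\frac{i\pi}{2k+1}\Big)\cos^{4k}\!\Big(\frac{i\pi}{2k+1}\Big).
\]

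Second, I would evaluate $K_{2k}(x,x)$ via Lemma~\ref{prop:repro} by substituting $\langle x, x\rangle/\|x\|^2 = 1$ and $1/\|x\| = \cos(i\pi/(2k+1))$, then applying the classical identities
\[
C_{2j}^{(1)}(\cos\theta) = \frac{\sin((2j+1)\theta)}{\sin\theta},\qquad C_{2m}^{((d-2)/2)}(1) = \binom{d+2m-3}{2m}.
\]
After multiplying $2\Lambda_i$ by this expression and cancelling the common factors $\cos^{4k}(i\pi/(2k+1))$, $\sin^2(i\pi/(2k+1))$, $\pi$, and $|S^{d-1}|$, one obtains
\[
|X_i| = \frac{8}{(2k+1)(d-2)} \sum_{m=0}^{k}\sum_{j=0}^{k-m} (d+4m-2)\binom{d+2m-3}{2m} \sin^2\!\Big(\frac{(2j+1)i\pi}{2k+1}\Big).
\]

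Finally, the binomial identity
\[
\frac{d+4m-2}{d-2}\binom{d+2m-3}{2m} = \binom{d+2m-1}{2m} - \binom{d+2m-3}{2m-2},
\]
which is the familiar formula $\dim \mathrm{Harm}_{2m}(\R^d) = (4m+d-2)(2m+d-3)!/((2m)!(d-2)!)$ rewritten in binomial form, converts the previous display into the expression in \eqref{eq:integer} upon specializing to $i = 1$. Since $|X_1|$ is a nonnegative integer, this establishes the claim. The proof contains no essential obstacle; the only nontrivial ingredient is the binomial identity above, and the rest is careful bookkeeping of the trigonometric and combinatorial constants produced by Lemmas~\ref{prop:radius} and \ref{prop:repro}. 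I would remark that the same argument in fact yields integrality for every $i=1,\dots,k$ (with $\sin^2((2j+1)i\pi/(2k+1))$ in place of $\sin^2((2j+1)\pi/(2k+1))$), but the case $i=1$ suffices for the appendix.
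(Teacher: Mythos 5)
Your proposal is correct and follows essentially the same route as the paper: both identify the expression as $|X_1| = \Lambda_1/w_1 = 2\Lambda_1 K_{2k}(x,x)$, evaluate the kernel via Lemma~\ref{prop:repro} and Lemma~\ref{prop:radius} using $C_{2j}^{(1)}(\cos\theta)=\sin((2j+1)\theta)/\sin\theta$, and finish with the identity $\frac{d+4m-2}{d-2}C_{2m}^{((d-2)/2)}(1)=\dim\mathrm{Harm}_{2m}(\R^d)=\binom{d+2m-1}{2m}-\binom{d+2m-3}{2m-2}$. The only (harmless) difference is that you carry a general layer index $i$ and specialize to $i=1$ at the end, whereas the paper works with the first layer throughout.
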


\begin{proof}
We use the same notations $r_{1}, S_{r_{1}}^{d-1}, \Lambda_{1}$
as in Section~\ref{sect:4}.
According to Theorem~\ref{thm:HS09} (ii), 
let $w_{1} = w(x)$ for every $x \in X \cap S_{r_{1}}^{d-1}$.
By combining Theorem~\ref{thm:Mysovskikh} (ii) and Lemma~\ref{prop:repro}, 
we have $w_{1} = 1/ (2 K_{2k}(x, x))$.
Thus by Lemma~\ref{prop:radius}, 
\begin{align*}
|X \cap S_{r_{1}}^{d-1}| = \frac{\Lambda_{1}}{w_{1}} 
=& 
\frac{8}{2k + 1} \bigg ( \sin \frac{\pi}{2k + 1} \bigg)^{2}
\sum_{m = 0}^{k} \sum_{j = 0}^{k - m} \frac{d + 4m - 2}{d-2}
\bigg \{ C_{2j}^{(1)} \bigg ( \cos \frac{\pi}{2k + 1} \bigg ) \bigg \}^{2} 
C_{2m}^{((d - 2)/2)} (1) \\
=&
\frac{8}{2k + 1}
\sum_{m = 0}^{k} \sum_{j = 0}^{k - m}
\bigg ( \sin \frac{(2j + 1) \pi}{2k + 1} \bigg )^{2} 
\frac{d + 4m - 2}{d -2} C_{2m}^{((d - 2)/2)} (1) \\
=& 
\frac{8}{2k + 1}
\sum_{m = 0}^{k} \sum_{j = 0}^{k - m}
\bigg ( \sin \frac{(2j + 1) \pi}{2k + 1} \bigg )^{2} 
\bigg \{ \binom{d + 2m - 1}{2m} - \binom{d + 2m - 3}{2m - 2} \bigg\}. 
\end{align*}
where the last equality follows by the fact
$\frac{d + 4m - 2}{d - 2} C_{2m}^{((d - 2)/2)} (1) = \dim ({\rm Harm}_{2m} 
(\R^{d}))$ (see~\cite{BB05}). 
\qquad\end{proof}

Let $n$ be an positive integer.
For $j = 0, \ldots, k$,
we obtain approximate sine functions
\begin{align*}
S_{1} (n; k, d) =& \sum_{l = 0}^{n- 1} \frac{(-1)^{2l}}{(4l + 1)!} 
\bigg ( \frac{2j + 1}{2k + 1} \pi_{-} \bigg )^{4l + 1} -
\sum_{l = 0}^{n - 1} \frac{1}{(4l + 3)!} 
\bigg ( \frac{2j + 1}{2k + 1} \pi_{+} \bigg )^{4l + 3}, \\
S_{2} (n; k, d) =& \sum_{l = 0}^{n- 1} \frac{(-1)^{2l}}{(4l + 1)!} 
\bigg ( \frac{2j + 1}{2k + 1} \pi_{+} \bigg )^{4l + 1} -
\sum_{l = 0}^{n - 1} \frac{1}{(4l + 3)!} 
\bigg ( \frac{2j + 1}{2k + 1} \pi_{-} \bigg )^{4l + 3} + 
\frac{(\pi_{+})^{4n}}{(4n)!}, 
\end{align*}
where $\pi_{\pm}$ are positive real numbers with $\pi_{-} < \pi < \pi_{+}$.
These functions
satisfy 
$S_{1} (n; k, d) < \sin ((2j + 1)\pi/(2k + 1)) < S_{2} (n; k, d)$.  
By using Lemma \ref{lem:small-dim} and the above equations, we have
\begin{align*}
N_{1} := & \frac{8}{2k + 1}
\sum_{m = 0}^{k} \sum_{j = 0}^{k - m}
(S_{1} (n;k,d))^{2}
\bigg \{ \binom{d + 2m - 1}{2m} - \binom{d + 2m - 3}{2m - 2} \bigg\} \\
& \; \; < |X_{1}| <
 \frac{8}{2k + 1}
\sum_{m = 0}^{k} \sum_{j = 0}^{k - m}
(S_{2} (n;k,d))^{2}
\bigg \{ \binom{d + 2m - 1}{2m} - \binom{d + 2m - 3}{2m - 2} \bigg\} =: 
N_{2}.
\end{align*}
By choosing the suitable values $n$ and $\pi_{\pm}$, 
we see that
(\ref{eq:integer}) are not integers when 
(i) $k = 3$, $3 \le d \le 30$, (ii) $k =5$, $3 \le d \le 90$.
 
For example, for $(k, d) = (3, 20)$, 
by substituting $n = 5$, $\pi_{+} = 3.14160$ and $\pi_{-} = 3.14159$, 
we obtain 
$$ 47868.2 < N_{1} < |X_{1}| < N_{2} < 47868.8. $$
When
$(k, d) = (5, 80)$, 
by substituting $n = 11$, $\pi_{+} = 3.1415926535898$ and $\pi_{-} = 
3.1415926535897$, 
we obtain
$$
318122993450.96 
< N_{1} < |X_{1}| < N_{2} < 3.18122993450.99.
$$
It remains to consider the case when $d = 2$.
Since the radii and weights of minimal formulas of degree $4k+1$ 
are determined by Lemma~\ref{prop:radius}.  
By applying these values to the equation in \cite[Theorem 3.1.1 (2)]{BBHS10}, 
we easily see that there exists no two-dimensional minimal formula, which
completes the proof.

\subsection*{Acknowledgments}
This research started during the second and 
third author's visit at the University of Texas at Brownsville, 2010,
under the sponsorship of the Japan Society for the Promotion of Science.
They would like to express their sincerest appreciation to 
Oleg Musin for his hospitality and discussions throughout this work.
The first author is deeply grateful to Hiroyuki Matsumoto
for fruitful discussions. 
The third author would also like to thank Eiichi Bannai, Etsuko Bannai, Yuan Xu
for many valuable comments for the present work.

\end{document}